\newtheorem{theorem}{Theorem}
\newtheorem{corollary}[theorem]{Corollary}
\newtheorem{lemma}[theorem]{Lemma}
\newtheorem{proposition}[theorem]{Proposition}
\newtheorem{remark}[theorem]{Remark}
\newenvironment{proof}[1][Proof]{\noindent\textbf{#1.} }{\ \rule{0.5em}{0.5em}}
\begin{document}

\title{Wilf's Conjecture}
\markright{Wilf's Conjecture}
\author{Valerio De Angelis and Dominic Marcello}

\date{November 6, 2015}
\maketitle

\begin{abstract}
In a Note in this Monthly, Klazar raised the question of whether the alternating sum of the Stirling numbers of the second kind
$B^\pm(n)=\sum_{k=0}^n(-1)^kS(n,k)$ is ever zero for $n\neq 2$. In this article, we present an exposition of the history of this problem, and an economical account of a recent proof
that there is at most one $n\neq 2$ for which $B^\pm(n)=0$.
\end{abstract}

\newcommand{\bn}{B^{\pm}(n)}

\section{Introduction}

Entry A000587 in the Online Encyclopedia of
Integer Sequences \cite{OE} describes a relatively little known sequence. It was first recorded by
Ramanujan, who noted that it arises from the alternating version of the infinite series that he used to define the Bell numbers \cite[p.53]{BB}.  It was subsequently studied by many
authors (\cite{Be},\cite{UC},\cite{LP}, see \cite{DLO} for an extensive list of references). The terms of the sequence were sometimes called {\em complementary Bell numbers}, other times
{\em Rao Uppuluri-Carpenter numbers}, and yet other times they were not given any special name.

More recently, the sequence was studied in connection with product partitions of integers (\cite{HS},\cite{SV},\cite{Yang}). Klazar
proved results on the ordinary generating function \cite{Kla2}.

Sequence A000587 is also the object of study in Section 4 of Klazar's Monthly Note \cite{Kla}, where it is introduced as an example of sequences recording the surplus of objects
of even size over those of odd size. In that example, the objects are the partitions of $\{1,2,\ldots, n\}$ into $k$ disjoint subsets (called {\em blocks}).
The number of such objects is counted by the {\em Stirling numbers of the second kind} $S(n,k)$, whose sums over $k$
\[B(n) = \sum_{k=0}^n S(n,k)\]
are the {\em Bell numbers}, counting the total number of ways to partition $\{1,2,\ldots, n\}$ into disjoint subsets. Borrowing the notation from Klazar's article
(that will be used in this article),
sequence A000587 is
\[B^{\pm}(n)=\sum_{k=0}^n (-1)^k S(n,k).\]

So, $B^{\pm}(n)$
records the difference between the number of partitions of $\{1,2,\ldots, n\}$ with an even number of blocks and those with an odd number of blocks.
For example, there is one partition of $\{1,2\}$ with one block and one partition with two blocks, and so $B^{\pm}(2)=1-1=0$.
The first few terms of the sequence  are
\[
\begin{array}{c|ccccccccccc}
n &0 & 1 & 2&3&4&5&6&7&8&9&10\\
\hline
B^{\pm}(n) & 1 &-1& 0& 1& 1& -2& -9& -9& 50& 267& 413
\end{array}.
\]

 A question attributed to H. Wilf in \cite{Yang} asks whether $B^{\pm}(n)=0$ for only finitely many values of $n$.
Adamchik \cite{Adam} asked the more restrictive question of whether $n=2$ is the {\em only} value of $n$ for which $B^{\pm}(n)=0$.
 Klazar asked the same question in the Monthly note \cite{Kla} mentioned before.
 In a different context, this question is relevant to a result of Egorychev and Zima  \cite{EZ}, who showed that the sum $\sum_{k=1}^n k^a k!$
admits a closed form expression in terms of elementary functions only if $B^{\pm}(a+1)=0$. Note that  $\sum_{k=1}^n k  k!=(n+1)!-1$, in agreement with $B^{\pm}(2)=0$.

Progress was made by De Wannemacker, Laffey, and Osburn \cite{DLO}, who showed that $B^{\pm}(n)\neq 0$ if $n\neq 2$ or $2944838 \pmod{3\cdot 2^{20}}$.
The question (that has become known as {\em Wilf's conjecture})
has now been almost settled, using different methods, by Alexander \cite{Ale}, An \cite{An}, and Amdeberhan, De Angelis and Moll \cite{ADM}.
More precisely, it has been proved that there is at most one integer $n>2$ such that $B^{\pm}(n)=0$.

This article presents an economical account of the proof of the main result of \cite{ADM}, namely that $B^{\pm}(n)\neq 0$ for all $n>2$, with the possible exception
of one value. It also corrects some errors that were present in the original proof, and provides some additional numerical data
obtained using the Louisiana Optical Network Initiative high performing computer system.

For a positive integer $n$, denote by
$\nu_2(n)$ the exponent of the largest power of 2 that divides $n$. The integer $\nu_2(n)$ is called the {\em 2-adic valuation } of $n$.
Also define $\nu_2(0)=\infty$. Our strategy is to show that $\nu_2(B^{\pm}(n))$ is finite for all but at most one value of $n>2$.

The route to this result is summarized and illustrated by the  tree shown in Figure \ref{fig-tree1}.
The top three edges of the tree correspond to the residue class of
$n \pmod{3}$. The number by the side of the edge (if present) gives the
(constant) 2-adic valuation of
$B^\pm(n)$ for that residue class. For example
$\nu_2(B^\pm(3n+1))=0$. If there is no number next to the edge,
the 2-adic valuation is not constant for that residue class, so $n$ needs to
be split further. The split at each stage is conducted by replacing the
index $n$ of the sequence by $2n$ and $2n+1$. For example, the sequence
$\nu_2(B^\pm(12n+2))$ is not constant so it generates the two new
sequences
$\nu_2(B^\pm(24n+2))$ and
$\nu_2(B^\pm(24n+14))$.  Constant sequences include
$\nu_2(B^\pm(12n+8))=\nu_2(B^\pm(12n+5))=1$ and
$\nu_2(B^\pm(12n+11))=2$.

\begin{figure}[ht]
\begin{center}
\includegraphics[scale=0.55]{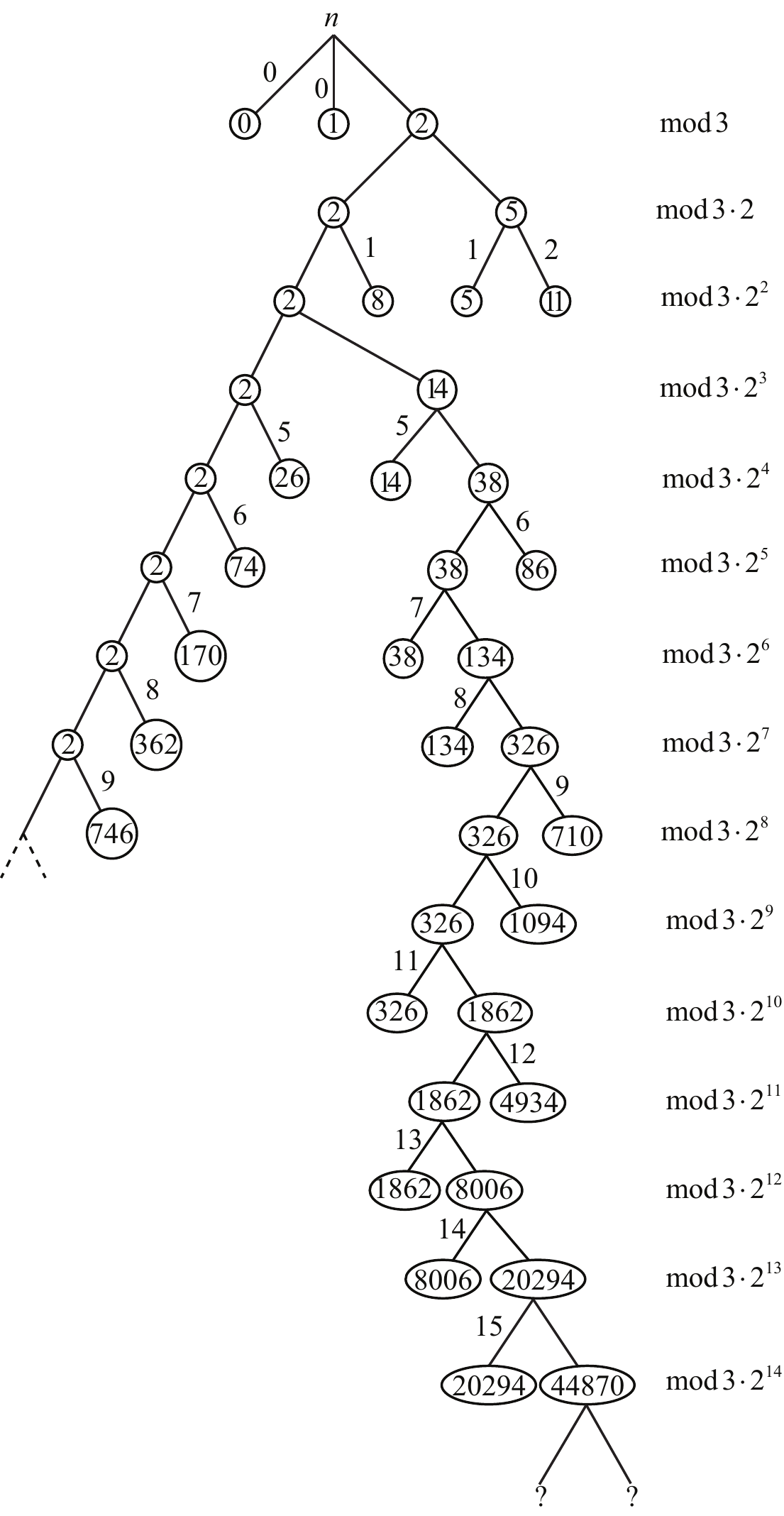}
\caption{The $2$-adic valuation of $B^{\pm}(n)$}
\label{fig-tree1}
\end{center}
\end{figure}

Theorem \ref{24n2} shows that the infinite branch on the left, coming
from the splitting of $24n+2$, has a well-determined structure, implying that $B^\pm(24n+2)\neq 0$  for all $n>0$. The
other infinite branch, corresponding to $24n+14$, exhibits an unpredictable pattern of left and right turns.
Theorem \ref{24n14} shows that if there are infinitely many right turns, then $B^\pm(n)\neq 0 $ for all $n\neq 2$.
But if there are only finitely many right turns, then there will be a unique integer $n^*>2944838$ such that $B^\pm(n^*)=0$, and the
binary digits of $n^*$ correspond to the left or right turns of the tree.

The main tool used throughout the paper  is the representation of $B^\pm(n)$ as the top left entry of the $n$-th  power of an infinite matrix $P$. Section 2 derives this representation, while Sections 3 and 4 develop suitable notation and
 describe symmetry properties of powers of $P$. The powers of form $P^{3\times 2^m}$ have a surprising structure when computed modulo a power of 2. Section 5 derives the results on this structure that are needed for the
 proofs of the two main theorems, that are presented in Sections 6 and 7.

Some computations in Section 5 depend on the computer algebra system Mathematica. As mentioned in that section, it is possible to derive more results on the first few powers of $P$ that would make
all computations accessible to a reader who does not wish to use a computer, and has a generous amount of time and enthusiasm to perform lengthy and routine algebra calculations. But this would make the paper longer by a few pages.
We have chosen not to follow that route, and rely instead on Mathematica to perform calculations that are not reasonably accessible by hand.

\section{Matrix representation for $B^{\pm}({\MakeLowercase n})$}
The recurrence relation for the Stirling numbers of the second kind,
\begin{equation}
S\left( n+1,k\right) =S\left( n,k-1\right) +kS\left( n,k\right),   \label{S2}
\end{equation}
can be proved by considering first the partitions of $\{1,\ldots, n+1\}$ for which the element $n+1$
forms a block by itself (giving the first term on the right), and then the other partitions.

Define monic polynomials $\lambda _{n}\left( x\right) $ (of degree
$n$ and with integer coefficients) recursively by%
\begin{eqnarray}
\lambda _{0}\left( x\right) &=&1,  \notag \\
\lambda _{n+1}\left( x\right) &=&x\lambda _{n}\left( x\right) -\lambda
_{n}\left( x+1\right) .  \label{lambda}
\end{eqnarray}%

As shown in the next lemma, $B^{\pm}(n)$ can be expressed as the constant term of this family of polynomials. The proof is a simple induction argument using (\ref{S2}).
\begin{lemma}
\label{b}
For each $n\geq 0$,
\begin{equation}
 B^{\pm}(n+j)=\sum\limits_{k=0}^{j} \left(
-1\right) ^{k}\lambda _{n}(k)S\left( j,k\right)  \ \ \mbox{for all} \ j\geq 0.  \label{1}
\end{equation}
In particular,
\[B^{\pm}(n)=\lambda_n(0).\]
\end{lemma}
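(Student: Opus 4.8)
The plan is to prove the identity (\ref{1}) by induction on $j$ for fixed $n$, and then read off the ``in particular'' statement by setting $j=0$.

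First I would establish the base case $j=0$. Since $S(0,0)=1$ and the sum on the right of (\ref{1}) collapses to the single term $k=0$, giving $(-1)^0\lambda_n(0)S(0,0)=\lambda_n(0)$, I need the left side $B^{\pm}(n)$ to equal $\lambda_n(0)$. But this is exactly the ``in particular'' claim, so the base case and the final statement are really the same assertion; I cannot assume it. Instead I would anchor the induction differently: I would treat (\ref{1}) as an identity to be proved for all $j\ge 0$ \emph{and} all $n\ge 0$ simultaneously, and verify it directly at the smallest indices using the definitions $\lambda_0(x)=1$, the Stirling recurrence (\ref{S2}), and the defining sum $B^{\pm}(m)=\sum_k(-1)^kS(m,k)$. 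Concretely, the cleanest anchor is to prove the special case $B^{\pm}(j)=\sum_{k=0}^j(-1)^k\lambda_0(k)S(j,k)=\sum_{k=0}^j(-1)^kS(j,k)$ outright, which holds by definition since $\lambda_0\equiv 1$. This is the $n=0$ instance of (\ref{1}).

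Next I would carry out the inductive step in $n$: assuming (\ref{1}) holds for some $n$ and all $j$, I would prove it for $n+1$ and all $j$. Here the recurrences feed into each other. Writing $\lambda_{n+1}(k)=k\lambda_n(k)-\lambda_n(k+1)$, I would substitute into the target sum $\sum_{k=0}^{j}(-1)^k\lambda_{n+1}(k)S(j,k)$ and split it into two pieces, $\sum_k(-1)^k\,k\,\lambda_n(k)S(j,k)$ and $-\sum_k(-1)^k\lambda_n(k+1)S(j,k)$. The key manipulation is to recognize, via the Stirling recurrence (\ref{S2}) in the form $S(j+1,k)=S(j,k-1)+kS(j,k)$, that these two pieces reassemble into $\sum_k(-1)^k\lambda_n(k)S(j+1,k)$, which by the induction hypothesis (applied at index $j+1$) equals $B^{\pm}(n+1+j)$. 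This is precisely the claim for $n+1$. Reindexing the shifted sum $\sum_k(-1)^k\lambda_n(k+1)S(j,k)$ and matching it against the $S(j,k-1)$ term is the step where the signs and index shifts must line up, and that bookkeeping is what I expect to be the main obstacle.

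Finally, once (\ref{1}) is established for all $n$ and all $j$, I would set $j=0$: the right side is the single term $(-1)^0\lambda_n(0)S(0,0)=\lambda_n(0)$ and the left side is $B^{\pm}(n)$, yielding $B^{\pm}(n)=\lambda_n(0)$ as desired. I expect the only genuine subtlety to be the interlocking of the two recurrences—the double induction must be set up so that the step for $\lambda_{n+1}$ uses the hypothesis at the \emph{incremented} argument $j+1$ rather than at $j$, and verifying that the $(-1)^k$ factors and the two Stirling contributions combine without leftover boundary terms at $k=0$ and $k=j+1$.
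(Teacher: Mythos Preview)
Your proposal is correct and is essentially the approach the paper has in mind: the paper only says ``a simple induction argument using (\ref{S2}),'' and what you describe---anchoring at $n=0$ via $\lambda_0\equiv 1$, then passing from $n$ to $n+1$ by combining the recursion (\ref{lambda}) for $\lambda_{n+1}$ with the Stirling recurrence (\ref{S2}) so that the right side becomes $\sum_k(-1)^k\lambda_n(k)S(j+1,k)$---is exactly such an argument. Your worry about boundary terms is unfounded: the $k=0$ term in the first piece vanishes because of the factor $k$, and the $k=j+1$ term in the first piece vanishes because $S(j,j+1)=0$, so the two pieces merge cleanly into the sum up to $j+1$.
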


The {\em descending factorial} $\left( x\right) _{r}$ is defined by
$\left(x\right) _{0}=1$, $\left(x\right) _{r}=x\left( x-1\right) \cdots \left( x-r+1\right) $, $r\geq 1$.
The following recurrence relations for $\left( x\right) _{r}$ are readily proved from the definition.

\begin{lemma}
\label{2} $\left( x\right) _{r}$ satisfies the recursions%
\[
x\left( x\right) _{r} =\left( x\right) _{r+1}+r\left( x\right) _{r}, \ \ \ \ \ \
\left( x+1\right) _{r} =\left( x\right) _{r}+r\left( x\right) _{r-1}.
\]
\end{lemma}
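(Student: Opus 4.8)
The plan is to prove both recurrences directly from the product definition $\left(x\right)_r = x(x-1)\cdots(x-r+1)$, treating them as elementary identities for the descending factorial with no appeal to the polynomials $\lambda_n$ or to $B^\pm(n)$. The two key structural observations are that advancing the index from $r$ to $r+1$ merely \emph{appends} the factor $(x-r)$ to the product, while shifting the argument from $x$ to $x+1$ merely \emph{prepends} the factor $(x+1)$. Each recurrence is then a one-line rearrangement, and the second will be obtained from the first by substitution.

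For the first identity I would observe that the product defining $\left(x\right)_{r+1}$ is exactly the product defining $\left(x\right)_r$ with one additional factor, namely
\[
\left(x\right)_{r+1} = x(x-1)\cdots(x-r+1)(x-r) = \left(x\right)_r\,(x-r).
\]
Expanding the right-hand side gives $\left(x\right)_{r+1} = x\left(x\right)_r - r\left(x\right)_r$, and transposing the last term yields $x\left(x\right)_r = \left(x\right)_{r+1} + r\left(x\right)_r$. This covers all $r \geq 0$ uniformly, the case $r=0$ reducing to the definition $\left(x\right)_1 = x$.

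For the second identity I would, for $r \geq 1$, peel off the leading factor of the shifted product to write $\left(x+1\right)_r = (x+1)\,x(x-1)\cdots(x-r+2) = (x+1)\left(x\right)_{r-1}$, then split $x+1 = x + 1$ and feed $x\left(x\right)_{r-1} = \left(x\right)_r + (r-1)\left(x\right)_{r-1}$ (the first recurrence with $r$ replaced by $r-1$) back in, collecting the two copies of $\left(x\right)_{r-1}$ into $r\left(x\right)_{r-1}$. There is no genuine obstacle in any of this; the only point needing care — and hence the nearest thing to a ``hard step'' — is the boundary case $r=0$ of the second recurrence, where $\left(x\right)_{r-1}$ is formally undefined. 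This is harmless because the coefficient $r$ then vanishes and the identity degenerates to the trivial $\left(x+1\right)_0 = \left(x\right)_0 = 1$, so I would simply record $r=0$ separately rather than inside the general computation.
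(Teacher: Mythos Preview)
Your proof is correct and is exactly what the paper intends: it states only that the recurrences ``are readily proved from the definition'' and gives no further argument, so your direct expansion of the product is precisely the omitted verification. Your handling of the $r=0$ boundary case in the second identity is also appropriate.
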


Using these recursions, $B^{\pm}(n)$ can be expressed as the top left entry of the $n$-th power
of an infinite matrix.

\begin{proposition}
\label{P}
Let $P$ be the infinite matrix indexed by the non-negative integers with entries $P(r,s), r,s\geq 0$,  defined by
\[
\begin{array}{lcl}
P(r+1,r)&=&1\\
P(r,r)&=&r-1\\
P(r,r+1)&=&-r-1\\
P(r,s)&=&0 \ \  \mbox{for } |r-s|>1,
\end{array}
\]
or
\begin{equation*}
P =\left(
\begin{array}{ccccccc}
-1 & -1 & 0 & 0 & 0 & 0 & \cdots\\
1 & 0 & -2 & 0 & 0 & 0 & \cdots\\
0 & 1 & 1 & -3 & 0 & 0 & \cdots\\
0 & 0 & 1 & 2 & -4 & 0 & \cdots\\
0 & 0 & 0 & 1 & 3 & -5 & \cdots\\
0 & 0 & 0 & 0 & 1 & 4 & \cdots \\
\vdots & \vdots & \vdots & \vdots & \vdots & \vdots & \ddots
\end{array}%
\right).
\end{equation*}
Then
\[B^{\pm}(n)=P^n(0,0).\]
\end{proposition}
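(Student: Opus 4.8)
The plan is to expand each $\lambda_n$ in the basis of descending factorials and recognize the resulting coefficient recurrence as multiplication by $P$. Since $\lambda_n$ has degree $n$ (by induction from \eqref{lambda}, as $x\lambda_n(x)$ raises the degree by one while $\lambda_n(x+1)$ does not), I may write
\[
\lambda_n(x) = \sum_{r=0}^{n} c_{n,r}\,(x)_r
\]
for unique integers $c_{n,r}$, adopting the convention $c_{n,r}=0$ for $r>n$. Because $(0)_0=1$ and $(0)_r=0$ for $r\geq 1$, the constant term of $\lambda_n$ is exactly $c_{n,0}$, so Lemma \ref{b} gives $B^{\pm}(n)=\lambda_n(0)=c_{n,0}$. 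It therefore suffices to show that the vector $\mathbf{c}_n=(c_{n,0},c_{n,1},\ldots)^T$ satisfies $\mathbf{c}_n = P^n\mathbf{c}_0$, since then $c_{n,0}=(P^n\mathbf{c}_0)_0=P^n(0,0)$.

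First I would derive the recurrence for the coefficients. Substituting the expansion into \eqref{lambda} and applying the two identities of Lemma \ref{2} to rewrite $x(x)_r=(x)_{r+1}+r(x)_r$ and $(x+1)_r=(x)_r+r(x)_{r-1}$, the right-hand side $x\lambda_n(x)-\lambda_n(x+1)$ becomes a single expansion in the $(x)_s$ after reindexing. Collecting the coefficient of $(x)_s$ yields
\[
c_{n+1,s}=c_{n,s-1}+(s-1)\,c_{n,s}-(s+1)\,c_{n,s+1}.
\]
This is precisely the statement $\mathbf{c}_{n+1}=P\mathbf{c}_n$: the three terms on the right correspond exactly to the sub-diagonal entry $P(s,s-1)=1$, the diagonal entry $P(s,s)=s-1$, and the super-diagonal entry $P(s,s+1)=-s-1$.

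With the base case $\lambda_0(x)=1=(x)_0$, so that $\mathbf{c}_0=(1,0,0,\ldots)^T$ is the first standard basis vector, induction gives $\mathbf{c}_n=P^n\mathbf{c}_0$, and reading off the top entry completes the argument.

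The one point requiring care is that $P$ is an infinite matrix, so the products $P^n\mathbf{c}_0$ must be well-defined. This is guaranteed by the finite support of each $\mathbf{c}_n$: since $\deg\lambda_n=n$, only the entries $c_{n,0},\ldots,c_{n,n}$ are nonzero, and each application of the tridiagonal $P$ is a finite sum of at most three terms. Equivalently, $P$ is banded, so $P^n$ is banded with bandwidth $n$ and every entry of $P^n$ is a finite sum, so no convergence issue arises. I expect the only real bookkeeping obstacle to be keeping the index shifts in the coefficient-collection step consistent, and in particular verifying that the update matrix is $P$ itself rather than its transpose.
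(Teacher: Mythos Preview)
Your proof is correct and follows essentially the same route as the paper: expand $\lambda_n$ in the basis of descending factorials, use Lemma~\ref{2} to turn the recursion \eqref{lambda} into $\mathbf{c}_{n+1}=P\mathbf{c}_n$, iterate, and read off the top entry via Lemma~\ref{b}. Your write-up is in fact somewhat more explicit than the paper's, since you display the coefficient recurrence $c_{n+1,s}=c_{n,s-1}+(s-1)c_{n,s}-(s+1)c_{n,s+1}$ and match it entry-by-entry to the rows of $P$, whereas the paper simply asserts that the matrix equation follows from Lemma~\ref{2}.
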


\begin{proof}
Express the polynomials $\lambda_n(x)$ in terms of the descending factorial
\[\lambda_n(k)=\sum_{r=0}^n c_n(r) (k)_r\] where $c_n(r)$ are integers  with $c_0(0)=1$, $c_0(r)=0$ for $r>0$ and $c_n(r)=0$ if $r>n$.
Writing {\boldmath $c$}$_n$ for the vector $(c_n(r):r\geq 0)$, the recursive definition of $\lambda_n(x)$ and  Lemma \ref{2} yield the matrix equation
$\mbox{\boldmath $c$}_{n+1}=P\mbox{\boldmath $c$}_n.$
Note that powers of $P$ are well-defined, because each row or column has only finitely many non-zero entries.
So by iterating $c_{n+1}=Pc_n$ we obtain $\mbox{\boldmath $c$}_n=P^n\mbox{\boldmath $c$}_0,$
or $c_n(r)=P^n(r,0), r\geq 0.$
Since clearly $c_n(0)=\lambda_n(0)$, the result follows from Lemma \ref{b}.
\end{proof}

\section{$\MakeLowercase j$-diagonal matrices and block forms}

The study of $\bn$ is thus reduced to the study of powers of the infinite matrix $P$. This matrix has the property that $P(r,s)=0$ if
$|r-s|>1$. An infinite matrix $A$ having the property that $A(r,s)=0$ if $|r-s|>j$ will be called {\em $j$-diagonal}.
So, the matrix
$P$ is 1-diagonal.
A $j$-diagonal matrix has at most $2j+1$ non-zero entries on each row or column. Hence powers of the matrix are well-defined.

This section derives properties of $j$-diagonal matrices and their powers. It will be convenient to describe matrices in block form, and
some notation is now introduced.

Given intervals $I$ and $J$ over the non-negative integers, $A(I,J)$ denotes the submatrix of $A$ consisting of the entries $A(i,j)$ for
$i\in I$ and $j\in J$.  Let $n$ be a positive integer, and define the
 intervals  $I_k=\{kn, kn+1, \ldots ,(k+1)n-1\}$, $k\geq 0$.
 The {\em $n$-block form}
of $A$ is the matrix $A[n]$ whose $(r,s)$ entry
is the matrix $A(I_r,I_s)$.

Sometimes it is useful to describe a matrix in block form pictorially, as:

\[A=\left( \begin{array}{ccc}
\stackrel{i\times i}{\overbrace{X}} & Y & \ldots \\
Z & W & \ldots\\
\vdots & \vdots &
\end{array}\right),
\]
meaning that entries are $i\times i$ matrices.

This notation will be extended to rectangular blocks, so that
\[A=\left( \begin{array}{ccccc}
\stackrel{i\times i}{\overbrace{X_{00}}} & \stackrel{i\times j_1}{\overbrace{X_{01}}} & \cdots & \stackrel{i\times j_k}{\overbrace{X_{0k}}} &\cdots\\
X_{10} & X_{11} & \cdots & X_{1k} & \cdots \\
\vdots & \vdots &  &\vdots &
\end{array}\right),
\]
means that $X_{0k}$ is an $i\times j_k$ matrix.
When rectangular blocks are used, it will always be assumed that the size of the block at position $(r,s)$ is the size of the transpose of
the block at position $(s,r)$, while the diagonal blocks will always be square. With this convention, it is sufficient to specify the
size of the blocks in the top row, and
it is easily verified that the multiplication of two matrices written in the same block form can be done with the usual rule for matrix multiplication, where two individual blocks will
also be multiplied according to matrix multiplication.

A $j$-diagonal infinite matrix $A$ is completely described by the $2j+1$ vectors over the non-negative integers corresponding to the main diagonal
and the $j$ diagonals on each side of it. For $i\geq 1$, the vector $A(r+i,r):r\geq 0$ will be called the
 {\em $i$-th lower diagonal},  and the vector $A(r,r+i)$ will be called the $i$-th upper diagonal.

\begin{lemma}
If $A$ is a $j$-diagonal matrix and $B$ is a $k$-diagonal matrix, then $AB$ is a $j+k$-diagonal matrix.
\end{lemma}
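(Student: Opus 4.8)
The plan is to work directly from the entrywise definition of matrix multiplication and to control the range of the summation index using the band structure of the two factors. For fixed non-negative integers $r$ and $s$, I would write the $(r,s)$ entry of the product as
\[(AB)(r,s)=\sum_{t\geq 0}A(r,t)B(t,s).\]
The first step is to observe that this sum is genuinely well-defined, i.e.\ it has only finitely many nonzero terms. Since $A$ is $j$-diagonal, $A(r,t)=0$ unless $|r-t|\leq j$, so at most $2j+1$ values of $t$ can contribute; this is the same finiteness observation already recorded in the excerpt to justify that powers of banded matrices make sense.

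The key step is then a triangle-inequality argument on the indices of the surviving terms. A summand $A(r,t)B(t,s)$ can be nonzero only if both factors are nonzero, which forces $|r-t|\leq j$ (from the $j$-diagonality of $A$) and $|t-s|\leq k$ (from the $k$-diagonality of $B$). Hence for every contributing index $t$,
\[|r-s|=|(r-t)+(t-s)|\leq |r-t|+|t-s|\leq j+k.\]
Consequently, if $|r-s|>j+k$ then no index $t$ contributes and every summand vanishes, so $(AB)(r,s)=0$. Since $r$ and $s$ were arbitrary, this is precisely the assertion that $AB$ is $(j+k)$-diagonal.

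I do not expect any serious obstacle here; the only point deserving a moment's care is the well-definedness of the infinite product, which must be secured before one is entitled to manipulate the sum. Once that is in place, the conclusion is an immediate consequence of the triangle inequality applied to the row and column distances of the nonvanishing entries.
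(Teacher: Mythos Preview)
Your argument is correct and is essentially the same as the paper's: both show that a nonzero summand forces the row and column indices to be within distance $j+k$, the paper via the reverse triangle inequality $|r+k-s|\geq k-|r-s|$ and you via the direct triangle inequality $|r-s|\leq |r-t|+|t-s|$. Your explicit remark on well-definedness of the infinite sum is a welcome addition but does not change the substance.
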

\begin{proof}
Suppose $k\geq i+j$. Then $(AB)(r,r+k)=\sum_{s\geq 0}A(r,s)B(s,r+k)$. If $A(r,s)\neq 0$, then $|r-s|\leq i$, and so
$|r+k-s|\geq k-|r-s|>i+j-i=j$. Hence $B(s,r+k)=0$. Similarly, $(AB)(r+k,r)=0$.
\end{proof}

\begin{corollary} \label{jdiag}
For each $j\geq 1$, the matrix $P^j$ is $j$-diagonal.
\end{corollary}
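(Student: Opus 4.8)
The plan is to argue by a short induction on $j$, with the preceding lemma doing essentially all of the work. The base case $j=1$ requires only the observation, already recorded in the text, that $P$ itself is $1$-diagonal: by the defining relations in Proposition~\ref{P} one has $P(r,s)=0$ whenever $|r-s|>1$.

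For the inductive step I would assume that $P^j$ is $j$-diagonal and factor $P^{j+1}=P^j\cdot P$. The lemma, applied with the $j$-diagonal matrix $A=P^j$ and the $1$-diagonal matrix $B=P$, then yields that the product $P^{j+1}$ is $(j+1)$-diagonal. This closes the induction and establishes the corollary for all $j\ge 1$.

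I do not expect any genuine obstacle here, since the combinatorial content — that multiplying a band of width $j$ by a band of width $k$ produces a band of width $j+k$ — is exactly the content of the lemma. The only points worth a moment's care are purely formal: that powers of $P$ are well-defined, so that the factorization $P^{j+1}=P^j P$ is legitimate (this holds because each row and column of a $j$-diagonal matrix has only finitely many nonzero entries, as noted earlier), and that the hypotheses of the lemma are met at each stage, which they are by the inductive hypothesis together with the base case. One could equally phrase the argument non-inductively by invoking the lemma $j-1$ times to build up $P^j=P\cdots P$ from $j$ copies of the $1$-diagonal matrix $P$, but the induction is the cleaner packaging.
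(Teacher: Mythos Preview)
Your proposal is correct and matches the paper's approach: the paper states the result as an immediate corollary of the preceding lemma without further proof, and your induction on $j$ (using that $P$ is $1$-diagonal and applying the lemma to $P^{j+1}=P^j\cdot P$) is exactly the intended argument.
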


The next result highlights an essential symmetry property of the matrix $P$. The proof is easily carried out by induction, by writing $P^{j+1}=P^jP$.

\begin{lemma}
For all $j\geq 1$, $r\geq 0$ and $s\geq 0$, the following relation holds:
\begin{equation}\label{sym}
(-1)^r r! P^j(r,s) = (-1)^s s! P^j(s,r).
\end{equation}
\end{lemma}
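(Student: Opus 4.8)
The plan is to prove (\ref{sym}) by induction on $j$, expanding $P^{j+1}=P^jP$ as the remark suggests. It is convenient to abbreviate $d(r)=(-1)^r r!$, so that the claim reads $d(r)P^j(r,s)=d(s)P^j(s,r)$ for all $r,s\ge 0$; equivalently, writing $D$ for the diagonal matrix with entries $d(r)$, it asserts that $DP^j$ is symmetric.

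First I would dispatch the base case $j=1$ straight from the definition of $P$. Because $P$ is $1$-diagonal and the asserted identity is invariant under interchanging $r$ and $s$, only the cases $s=r$ (trivial) and $s=r+1$ need checking. For the latter, $P(r,r+1)=-r-1$ while $P(r+1,r)=1$, and the identity becomes
\[
d(r)(-r-1)=(-1)^{r+1}(r+1)! = d(r+1)\cdot 1,
\]
which holds. This is the only step that uses the explicit entries of $P$, and keeping the signs and factorials straight here is the one spot that genuinely requires care.

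For the inductive step, assuming the identity for $j$, I would compute
\begin{align*}
d(r)P^{j+1}(r,s) &= \sum_{t\ge 0} d(r)P^j(r,t)\,P(t,s) = \sum_{t\ge 0} d(t)P^j(t,r)\,P(t,s)\\
&= \sum_{t\ge 0} P^j(t,r)\,d(s)P(s,t) = d(s)\sum_{t\ge 0} P(s,t)\,P^j(t,r) = d(s)P^{j+1}(s,r),
\end{align*}
where the second equality applies the inductive hypothesis to the factor $P^j(r,t)$ and the third applies the base case $d(t)P(t,s)=d(s)P(s,t)$ to the factor $P(t,s)$; the last sum is the $(s,r)$ entry of $PP^j=P^{j+1}$. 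Each row and column of $P$ and of $P^j$ has only finitely many nonzero entries, so all sums are finite and the rearrangements are legitimate.

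I do not expect a real obstacle: once the base case is in hand, the induction is essentially one line, and its self-dual shape is transparent in matrix form, where it reads $DP^j=(DP)P^{j-1}=P^{\mathsf T}DP^{j-1}=P^{\mathsf T}(DP^{j-1})=(P^{\mathsf T})^j D$. The only thing to watch is the mild asymmetry of the two factorizations used—$P^{j+1}=P^jP$ at the start and $P^{j+1}=PP^j$ at the end—which is harmless by associativity.
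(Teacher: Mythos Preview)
Your proof is correct and is exactly the approach the paper indicates: induction on $j$ using $P^{j+1}=P^jP$, with the base case $j=1$ verified directly from the tridiagonal entries of $P$. The paper gives no further details beyond that hint, so there is nothing to add.
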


\section{2-adic valuation matrices and periodicity}

Let $s_2(n)$ be the number of 1's in the binary expansion of $n$.
The following classical formula by Legendre will often be used.
\begin{equation} \label{Leg}
\nu_2(n!) = n-s_2(n).
\end{equation}

An immediate consequence of  (\ref{sym}) is that the elements above the main diagonal of $P^j$ are highly divisible by 2, as made precise
by the following lemma.

\begin{lemma} \label{TR}
For each $r\geq 0$, $i\geq 1$, and $j\geq 1$,
\[\nu_2(P^j(r,r+i)\geq \nu_2(i!).\]
\end{lemma}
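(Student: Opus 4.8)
The plan is to exploit the symmetry relation (\ref{sym}) to trade the above-diagonal entry $P^j(r,r+i)$ for the below-diagonal entry $P^j(r+i,r)$, whose valuation is trivial to bound. First I would set $s=r+i$ in (\ref{sym}), obtaining
\[(-1)^r r!\, P^j(r,r+i) = (-1)^{r+i}(r+i)!\, P^j(r+i,r).\]
Taking $\nu_2$ of the absolute value of both sides and discarding the irrelevant signs gives
\[\nu_2(r!) + \nu_2\bigl(P^j(r,r+i)\bigr) = \nu_2((r+i)!) + \nu_2\bigl(P^j(r+i,r)\bigr).\]
Because $P$ has integer entries, every power $P^j$ does as well, so $\nu_2\bigl(P^j(r+i,r)\bigr) \geq 0$. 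Rearranging then yields
\[\nu_2\bigl(P^j(r,r+i)\bigr) \geq \nu_2((r+i)!) - \nu_2(r!).\]

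It remains to verify that $\nu_2((r+i)!) - \nu_2(r!) \geq \nu_2(i!)$, equivalently that $\nu_2((r+i)!) - \nu_2(r!) - \nu_2(i!) \geq 0$. I would observe that the left-hand side is precisely
\[\nu_2\!\left(\frac{(r+i)!}{r!\, i!}\right) = \nu_2\binom{r+i}{i},\]
which is nonnegative simply because $\binom{r+i}{i}$ is a positive integer. Alternatively one can invoke Legendre's formula (\ref{Leg}) to rewrite the target inequality as $s_2(r) + s_2(i) \geq s_2(r+i)$, the subadditivity of the binary digit-sum, which in turn follows from Kummer's theorem that $s_2(r)+s_2(i)-s_2(r+i)$ counts the carries in binary addition.

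There is no real obstacle here: the single idea needed is to recognize that the factorial inequality $\nu_2((r+i)!)-\nu_2(r!)\geq\nu_2(i!)$ is nothing more than the integrality of the binomial coefficient $\binom{r+i}{i}$. The symmetry relation (\ref{sym}) performs all the substantive work, converting the assertion about highly divisible above-diagonal entries into the elementary fact that the below-diagonal entries are integers.
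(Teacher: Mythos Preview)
Your argument is correct and is essentially identical to the paper's own proof: both apply the symmetry relation (\ref{sym}) to rewrite $\nu_2(P^j(r,r+i))$ as $\nu_2((r+i)!/r!)+\nu_2(P^j(r+i,r))$, split off $\nu_2(i!)$ via the binomial coefficient $\binom{r+i}{r}$, and conclude from the integrality of that coefficient and of $P^j(r+i,r)$.
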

\begin{proof}
Using (\ref{sym}),
\begin{eqnarray*}
\nu_2\left(P^j(r,r+i)\right)&=& \nu_2\left(\frac{(r+i)!}{r!}\right)+\nu_2\left( P^j(r+i,r)\right)\\
&=&\nu_2\left(\binom{r+i}{r}\right)+\nu_2(i!) + \nu_2\left( P^j(r+i,r)\right)\\
&\geq & \nu_2(i!),
\end{eqnarray*}
since both $\binom{r+i}{r}$ and $P^j(r+i,r)$ are integers.
\end{proof}

A {\em valuation matrix} is any matrix over ${\mathbb Z}^+ \cup \{\infty\}$. If $A$ is a matrix over $\mathbb Z$,
$\nu_2(A)$ is the valuation matrix defined by $\nu_2(A)(r,s)=\nu_2(A(r,s))$.
If $k\in {\mathbb Z}^+\cup \{\infty \}$,
the notation $\nu_2(A)\geq k$ will mean that each entry of $A$ has 2-adic valuation at least $k$. For matrices $A$, $B$,
$\nu_2(A)\geq \nu_2(B)$ will mean that $\nu_2(A(r,s))\geq \nu_2(B(r,s))$ for all $r,s$.

A {\em $j$-diagonal valuation matrix} is a matrix that arises as
the 2-adic valuation of a $j$-diagonal matrix over the integers. So a $j$-diagonal valuation matrix $M$ is such that $M(r,s)=\infty$ whenever $|r-s|>j$.

For valuation matrices $M$, $N$, define the valuation matrix $M*N$ by
\[(M*N)(r,s)=\min\{M(r,t)+N(t,s):t\geq 0\}.\]
Then it follows from the definitions that for  matrices $A,B$ over the integers, $\nu_2(AB)\geq \nu_2(A)*\nu_2(B).$
Also define  $M+\mbox{\boldmath $1$}$ to be the matrix $M$ with each entry increased by 1.
\begin{lemma} \label{sqr}
Suppose $A$ is a matrix over $\mathbb Z$, $M$ is a valuation matrix, and \linebreak
$\nu_2\left(A-I\right)\geq M$. Then $\nu_2\left(A^2-I\right)\geq \min\{M+\mbox{\boldmath $1$},M*M\}$.
\end{lemma}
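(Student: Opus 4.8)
The plan is to base everything on the algebraic identity
\[A^2 - I = (A-I)^2 + 2(A-I),\]
valid for any matrix $A$ over $\mathbb Z$, since $A^2 - I = (A-I)(A+I)$ and $A+I = (A-I) + 2I$, so that $(A-I)(A+I) = (A-I)^2 + 2(A-I)$. I would then bound the two summands on the right separately and recombine them entrywise at the end, using the fact that the $2$-adic valuation of a sum of integers is at least the minimum of their valuations.

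For the first summand I would apply the stated inequality $\nu_2(BC) \geq \nu_2(B) * \nu_2(C)$ with $B = C = A - I$, together with the monotonicity of the $*$-operation: since $(M'*N')(r,s) = \min_t\{M'(r,t)+N'(t,s)\}$ is nondecreasing in each entry of $M'$ and of $N'$, the hypothesis $\nu_2(A-I) \geq M$ gives
\[\nu_2\left((A-I)^2\right) \geq \nu_2(A-I) * \nu_2(A-I) \geq M * M.\]
For the second summand, multiplication by $2$ raises every entry's valuation by exactly one, so entrywise $\nu_2\left(2(A-I)\right) = \nu_2(A-I) + \mbox{\boldmath $1$} \geq M + \mbox{\boldmath $1$}$.

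Finally, for each pair $(r,s)$ the non-Archimedean inequality yields
\[\nu_2\left((A^2-I)(r,s)\right) \geq \min\left\{\nu_2\left((A-I)^2(r,s)\right),\ \nu_2\left(2(A-I)(r,s)\right)\right\} \geq \min\left\{(M*M)(r,s),\ (M+\mbox{\boldmath $1$})(r,s)\right\},\]
which is precisely the entrywise assertion $\nu_2(A^2-I) \geq \min\{M + \mbox{\boldmath $1$}, M*M\}$. I expect no serious obstacle here: the whole argument is the identity above plus two routine valuation estimates. The only points needing a little care are that the minimum in the conclusion is taken entrywise (so the bound is verified position by position, not as a single matrix inequality), and the monotonicity of $*$, which is what lets me pass from the hypothesis $\nu_2(A-I)\geq M$ to a bound stated in terms of $M*M$.
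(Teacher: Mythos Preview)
Your proof is correct and is essentially identical to the paper's: setting $R=A-I$, the paper writes $A^2-I=2R+R^2$ and bounds the two summands exactly as you do. Your extra remarks on the monotonicity of $*$ and the entrywise nature of the minimum are sound refinements of points the paper leaves implicit.
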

\noindent {\it Proof.}
Write $A=I+R$, so that $\nu_2(R)\geq M$. Then $A^2=I+2R+R^2$, and so
\begin{eqnarray*}\nu_2\left(A^2-I\right)& =&\nu_2\left(2R+R^2\right)  \geq \min\{\nu_2(2R),\nu_2(R^2)\}\\
&\geq & \min\{\nu_2(R)+\mbox{\boldmath $1$} ,\nu_2(R)*\nu_2(R)\} \geq  \min\{M+\mbox{\boldmath $1$},M*M\}.
\ \ \rule{0.55em}{0.55em}
\end{eqnarray*}
If all diagonals of a $j$-diagonal matrix are periodic, and $d$ is a common period for the diagonals, then
the matrix is determined by its top left corner of size $d+j$. The notation $A_k$ will denote the submatrix
$A(r,s):0\leq r,s \leq k-1$.

Suppose now that $B$ is a $(j+1)\times (j+1)$ matrix. Borrowing the notation for repeating decimals,
 $\overline{B}$ denotes the
$j$-diagonal infinite matrix obtained by extending
each diagonal of $B$ periodically. So, if $d$ is a period for all diagonals of $A$, then
$A=\overline{A_{j+d}}$. In particular, if all diagonals are constant, then $A=\overline{A_{j+1}}$.

This notation can also be used for matrices whose diagonals are periodic after an initial stretch.
So for example if $A$ is 2-diagonal, then
\[ A=\begin{pmatrix}
2&3&4&\infty\\
\cline{2-4}
2&1&1&1\\
0&1&1&2\\
\infty&0&1&1
\end{pmatrix}
\]
means that $A(0,r)$ and $A(r,0)$ are as shown by the topmost row and leftmost column, while for $r>0$,
 $A(r,r)=1$, $A(r,r+1)$ is alternating 1 and 2, $A(r,r+2)=A(r+1,r)=1$, and $A(r+2,r)=0$.

\section{The powers $P^{3\cdot 2^{\MakeLowercase m}}$}
Powers of $P$ of the form $3\cdot2^m$ play a special role. So define $d_m=3\cdot 2^m$. The main result of this section is
Proposition \ref{T8}, describing the top left $8\times 8$ corner of $P^{d_m}$ modulo $2^{m+3}$. The proof of that proposition will rely on an induction argument on $m$.
 However, in order to carry out the inductive step,
$m$ needs to be at least 4. This will require having a lower bound on the valuation of $P^{d_4}-I$ in order to start the induction.
Note that $P^{d_4}$ is a 48-diagonal infinite matrix, and so it has
up to 97 non-zero entries on each row or column. It is possible to derive the required lower bound for  $\nu_2(P^{d_4}-I)$ with calculations that can be performed by hand,
by first deriving additional properties of small powers of $P$.  We have chosen instead to rely on Mathematica
to perform computations that provide the required lower bound for $\nu_2(P^{d_4})$.

 The first step is the derivation of
explicit formulas for the non-zero diagonals of $P^{d_0}=P^3$.
A simple computation from the definition of $P$ gives the main diagonal and the three lower diagonals, and Lemma \ref{sym} gives the three upper diagonals.

\begin{lemma}
\label{P3L}
For all $r\geq 0$, we have:
\[
\begin{array}{ll}
P^3(r+3,r)=1 & P^3(r,r+1)=(r+1)(2+6r-3r^2)\\
P^3(r+2,r)=3r&P^3(r,r+2)=3r(r+1)(r+2)\\
P^3(r+1,r)=3r^2-6r-2& P^3(r,r+3)=-(r+3)(r+2)(r+1).\\
P^3(r,r)=r^3-9r^2+6r+1
\end{array}
\]
\end{lemma}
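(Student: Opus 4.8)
The plan is to use the fact that $P^3$ is $3$-diagonal (Corollary \ref{jdiag}), so that at most seven diagonals can be nonzero, and each is completely determined by its generic entry as a function of $r$. I would compute the main diagonal and the three lower diagonals by a direct, finite expansion, and then deduce the three upper diagonals from the symmetry relation (\ref{sym}).

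For the lower diagonals and the main diagonal, write $P^3(a,b)=\sum_{c,d}P(a,c)P(c,d)P(d,b)$. Since $P(x,y)=0$ unless $|x-y|\le 1$, the intermediate indices $c,d$ are confined to a narrow band around $a$ and $b$, so only a handful of triples give a nonzero summand. To get the entry $P^3(r+i,r)$ I would list the admissible index sequences $r+i\to c\to d\to r$ (each step moving by at most $1$), read off the corresponding product of entries using $P(s+1,s)=1$, $P(s,s)=s-1$, $P(s,s+1)=-s-1$, and sum. For $i=3$ there is a single admissible path $r+3\to r+2\to r+1\to r$, contributing $1\cdot 1\cdot 1=1$. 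For $i=2$ three paths contribute, summing to $(r-1)+r+(r+1)=3r$. The cases $i=1$ and $i=0$ involve more paths but are handled the same way, and collecting terms yields $3r^2-6r-2$ and $r^3-9r^2+6r+1$ respectively. Equivalently, one may first record $P^2$ and then form $P^3=P^2P$, which trades path enumeration for one extra round of tridiagonal multiplication.

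The three upper diagonals then follow at once from (\ref{sym}): rearranging gives $P^3(r,r+i)=(-1)^i\frac{(r+i)!}{r!}\,P^3(r+i,r)$, and since $\frac{(r+i)!}{r!}=(r+i)(r+i-1)\cdots(r+1)$, substituting the lower-diagonal formulas just computed produces $-(r+1)(3r^2-6r-2)=(r+1)(2+6r-3r^2)$, then $3r(r+1)(r+2)$, and finally $-(r+1)(r+2)(r+3)$ for $i=1,2,3$. The only step requiring genuine care is the main-diagonal computation $P^3(r,r)$, where the largest number of paths contribute and the cubic term must be assembled correctly; everything else is a short and essentially mechanical sum, with no conceptual obstacle.
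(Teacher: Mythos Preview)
Your proposal is correct and follows exactly the approach the paper indicates: compute the main and lower diagonals directly from the tridiagonal structure of $P$, then obtain the upper diagonals from the symmetry relation (\ref{sym}). The paper merely states this in one sentence, so your path-counting expansion and the rearranged form $P^3(r,r+i)=(-1)^i\frac{(r+i)!}{r!}P^3(r+i,r)$ simply flesh out what the authors leave as a routine computation.
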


We say that a matrix $A$ is {\em strictly lower triangular} if $A(r,r+i)=0$ for all $r\geq 0$, $i\geq 0$. A strictly lower triangular matrix modulo an integer is similarly defined.
The following lemma follows immediately from the definitions.
\begin{lemma} \label{A4}
Suppose $A$ is strictly lower triangular. Then
\begin{enumerate}
\item[(i)]
$A^2$ is strictly lower triangular,
\item[(ii)]
The first lower diagonal of $A^2$ is zero.
\item[(iii)]
The first three lower diagonals of $A^4$ are zero.
\end{enumerate}
\end{lemma}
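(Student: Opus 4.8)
The plan is to isolate a single quantitative statement about how the band of zeros below the diagonal widens under multiplication, and then read off all three parts as immediate special cases.

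For a matrix $C$, say that $C$ has \emph{lower gap} at least $k$ if $C(r,s)=0$ whenever $r-s<k$; thus strict lower triangularity is exactly lower gap $\geq 1$. The key step I would prove is that if $C$ has lower gap $\geq k$ and $D$ has lower gap $\geq \ell$, then $CD$ has lower gap $\geq k+\ell$. This follows directly from the product formula $(CD)(r,s)=\sum_t C(r,t)D(t,s)$: a summand is nonzero only when both $C(r,t)\neq 0$ and $D(t,s)\neq 0$, forcing $r-t\geq k$ and $t-s\geq \ell$, hence $r-s\geq k+\ell$. (The sum is finite because $C(r,t)=0$ for $t\geq r$, so only the indices $t\in\{0,\ldots,r-1\}$ contribute.) Applying this with $C=D=A$ of lower gap $\geq 1$ gives that $A^2$ has lower gap $\geq 2$, and applying it again to $A^4=(A^2)^2$ gives lower gap $\geq 4$.

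With this in hand, (i) and (ii) are immediate from $A^2$ having lower gap $\geq 2$: every entry $A^2(r,s)$ with $r\leq s$ vanishes, so $A^2$ is strictly lower triangular, and in particular the first lower diagonal, consisting of the entries $A^2(r+1,r)$ for which $r-s=1<2$, is zero. For (iii), the entries on the first three lower diagonals of $A^4$ are $A^4(r+i,r)$ with $i\in\{1,2,3\}$, i.e.\ $r-s=i<4$; since $A^4$ has lower gap $\geq 4$, these all vanish.

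There is essentially no obstacle here, which is why the paper flags the lemma as following immediately from the definitions. The only point worth a word of care is that the defining sums for these infinite-matrix products are finite, which holds because strict lower triangularity bounds the range of summation. Alternatively, one could prove (i)--(iii) by directly expanding $(A^2)(r,s)$ and $(A^4)(r,s)$ and tracking indices, but the lower-gap bookkeeping above packages all three claims into one clean, induction-free computation.
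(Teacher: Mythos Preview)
Your proof is correct. The paper gives no argument beyond the remark that the lemma ``follows immediately from the definitions,'' and your lower-gap bookkeeping is exactly the natural way to unpack that remark; there is no meaningful difference in approach.
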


\begin{lemma} \label{m7}
\begin{enumerate}
\item[(a)]
The 2-block matrix $P^3[2]-I$ is strictly lower triangular $\pmod{2}$.
\item[(b)]
All upper diagonals, the main diagonal, and the three lower diagonals of $P^{d_4}[8]-I$ are even.
\item[(c)]
If $m\geq 3$, $0\leq r \leq 7$ and $s\geq d_{m-1}+8$, then $\nu_2(P^{d_m})\geq m+7$.
\end{enumerate}
\end{lemma}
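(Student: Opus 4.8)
For (a), I would reduce the seven explicit formulas of Lemma \ref{P3L} modulo $2$. The $2$-block form groups indices into pairs $I_r=\{2r,2r+1\}$, and since $P^3$ is $3$-diagonal the only blocks lying on or above the block diagonal are $P^3(I_r,I_r)$, $P^3(I_r,I_{r+1})$, and $P^3(I_r,I_{r+2})$. For each entry in these finitely many blocks I would substitute $r\mapsto 2r$ or $r\mapsto 2r+1$ into the formulas: the two diagonal entries $P^3(2r,2r)$ and $P^3(2r+1,2r+1)$ both reduce to $1\pmod 2$, so they vanish after subtracting $I$, while every remaining entry in these blocks carries an explicit even factor (each upper-diagonal formula contains a product of consecutive integers, and $P^3(r+1,r)=3r^2-6r-2$ is even at $r=2r'$). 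Hence $P^3[2]-I$ is strictly lower triangular modulo $2$.

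For (b), I would write $P^3[2]=I+N$ with $N\equiv A\pmod 2$ and $A$ strictly lower triangular, and use $P^{d_4}=P^{48}=(P^3)^{16}$, so that $P^{d_4}[2]=(P^3[2])^{16}$. Squaring modulo $2$ kills the cross term, $(I+N)^2\equiv I+N^2\pmod 2$, and Lemma \ref{A4} shows the first lower diagonal of $A^2$ and the first three lower diagonals of $A^4$ vanish. The engine is a doubling principle: if a block matrix is supported at block-distance $\geq p$ below the diagonal, its square is supported at block-distance $\geq 2p$, which is immediate from the convolution formula for block multiplication. Applying this four times from $A$ (support $\geq 1$) gives that $A^{16}\equiv P^{d_4}[2]-I\pmod 2$ is supported at block-distance $\geq 16$ in the $2$-block form. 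Translating back, a nonzero $2\times 2$ block at block-distance $R-C\geq 16$ forces $r-s\geq 2(R-C)-1\geq 31$, so every odd entry of $P^{d_4}-I$ has $r-s\geq 31$. In particular all upper diagonals, the main diagonal, and (far more than) the first three lower diagonals are even.

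For (c), the hypotheses $r\leq 7$ and $s\geq d_{m-1}+8$ give $i:=s-r\geq 3\cdot 2^{m-1}+1\geq 1$, so the entries in question lie strictly above the diagonal and Lemma \ref{TR} applies: $\nu_2(P^{d_m}(r,s))\geq\nu_2(i!)$. By Legendre's formula (\ref{Leg}), $\nu_2(i!)=i-s_2(i)$, and since this is nondecreasing in $i$ it suffices to check the smallest admissible value $i_0=3\cdot 2^{m-1}+1=2^m+2^{m-1}+1$. Its binary expansion has $s_2(i_0)=3$, so $\nu_2(i_0!)=3\cdot 2^{m-1}-2$, and $3\cdot 2^{m-1}-2\geq m+7$ holds for all $m\geq 3$. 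This is exactly the claimed bound $m+7$.

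The main obstacle is the bookkeeping in (b): one must propagate the strictly-lower-triangular structure through four modulo-$2$ squarings and, crucially, convert the resulting block-distance bound in the $2$-block form into a bound on $r-s$ in the original matrix; the choice $d_4=3\cdot 2^4$ is precisely what makes the number of copies of $P^3$ equal to $2^4=16$, so that four squarings clear the first three lower diagonals. Part (c) is elementary but tight: the threshold $s\geq d_{m-1}+8$ is calibrated so that at $m=3$ one has $\nu_2(13!)=10=m+7$ with no room to spare, so any weaker hypothesis would fail.
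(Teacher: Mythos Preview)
Your arguments for (a) and (c) are correct and essentially identical to the paper's. In (c) you correctly note that $\nu_2(i!)$ is nondecreasing and that the bound is tight at $m=3$.

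For (b) your approach differs from the paper's. The paper \emph{re-blocks} at each stage: from $P^3[2]-I$ strictly lower triangular one squares to get $P^6[2]-I$ strictly lower triangular with first lower diagonal zero, which exactly means $P^6[4]-I$ is strictly lower triangular; repeating gives $P^{12}[8]-I$ strictly lower triangular, and then Lemma~\ref{A4}(iii) applied directly in the $8$-block form finishes. You instead stay in the $2$-block form through all four squarings and convert only at the end. This is a legitimate alternative and arguably more uniform, but your final translation is not quite right. From ``$2$-block distance $\geq 16$'' you pass to ``$r-s\geq 31$'' and then claim this clears the first three lower $8$-block diagonals. It does not: the third lower $8$-block diagonal contains entries with $r-s=31$ (namely $r\equiv 7$, $s\equiv 0\pmod 8$), which your entry-level bound does not exclude. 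The fix is to skip the intermediate step and translate block-to-block: an $8$-block at position $(R',C')$ is a $4\times 4$ array of $2$-blocks at distances $4(R'-C')-3,\dots,4(R'-C')+3$, so $R'-C'\leq 3$ forces $2$-block distance $\leq 15<16$, and all such entries are even. With that correction your route works; the paper's re-blocking avoids this bookkeeping by landing directly in the $8$-block form before the final fourth power.
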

\begin{proof}
\begin{enumerate}
\item[(a)]
Note that $P^3[2]$ is a 2-diagonal matrix.
The $(r,r)$ entry of the 2-block form of $P^3-I$ is
$$
\begin{pmatrix}
P^3(2r,2r)& P^3(2r,2r+1)\\
P^3(2r+1,2r) & P^3(2r+1,2r+1).
\end{pmatrix}
$$
It is easily verified from the formulas of Lemma \ref{P3L}  that all entries are even.
In a similar way, it is easy to check that the $(r,r+1)$ and $(r,r+2)$ entries of $P^3[2]$ are even.
\item[(b)]
From part (a), $A=P^3[2]-I$ is strictly lower triangular $\pmod{2}$. Hence from Lemma \ref{A4},  $P^6[2]-I\equiv A^2 \pmod{2}$ is strictly lower triangular $\pmod{2}$
and its first lower diagonal is even.
It follows
that $P^6[4]-I$ is strictly lower triangular $\pmod{2}$. Repeating the same argument, $B=P^{12}[8]-I$ is strict lower triangular. Hence from Lemma \ref{A4},
$P^{48}[8]-I\equiv B^4 \pmod{2}$ is strictly lower triangular $\pmod{2}$, and its first three lower diagonals are even.
\item[(c)]
If $s\geq d_{m-1}+8$ and $0\leq r \leq 7$,  then $s-r\geq d_{m-1}+1$. The binary expansion of $d_{m-1}+1=2^m+2^{m-1}+1$ has  three 1's, for all $m\geq 4$.
Hence by Lemma \ref{TR} and Legendre's formula (\ref{Leg}), $\nu_2(P^{d_m})(r,s)\geq \nu_2((d_{m-1}+1)!)=3\cdot 2^{m-1} +1-3\geq m+7$ for $m\geq 3$.
\end{enumerate}
\end{proof}

The following lemma gives a precise description of  when it is possible to replace the infinite matrix $P^i$ with the finite matrix $(P_n)^i$. The proof is done by induction on $i$, by writing
$(P_n)^{i+1}=(P_n)^iP_n$.
\begin{lemma}  \label{Pn}
For each $n\geq 1$ and $i\geq 1$,
\[\left(P_n\right)^i(r,s) = P^i(r,s) \ \ \mbox{for} \ 0\leq r,s \leq n-1, \ \ r+s+i\leq 2n-1.\]
\end{lemma}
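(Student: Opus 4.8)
The plan is to induct on $i$, exploiting the identity $(P_n)^{i+1}=(P_n)^iP_n$ flagged in the statement. I would take the base case $i=1$ to be immediate: by the definition of the truncation, $(P_n)(r,s)=P(r,s)$ for all $0\le r,s\le n-1$, and the constraint $r+s+1\le 2n-1$ holds automatically in that range.

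For the inductive step I would fix $r,s$ with $0\le r,s\le n-1$ and $r+s+(i+1)\le 2n-1$, and compare the two expansions
\[(P_n)^{i+1}(r,s)=\sum_{t=0}^{n-1}(P_n)^i(r,t)\,P(t,s),\qquad P^{i+1}(r,s)=\sum_{t\ge 0}P^i(r,t)\,P(t,s).\]
Since $P$ is $1$-diagonal, only $t\in\{s-1,s,s+1\}$ can contribute. For each such $t$ lying in $\{0,\dots,n-1\}$ one has $t\le s+1$, hence $r+t+i\le r+s+1+i\le 2n-1$, so the inductive hypothesis applies and gives $(P_n)^i(r,t)=P^i(r,t)$. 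The finite and infinite sums then agree term by term, with the sole possible exception of the term at $t=s+1$ that the truncation discards, which occurs precisely when $s=n-1$ (the lower index $t=s-1$ is discarded by both sums when $s=0$, so it creates no discrepancy).

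The heart of the argument, and the one step that is not pure bookkeeping, is showing that this discarded boundary term vanishes. When $s=n-1$ it equals $P^i(r,n)\,P(n,n-1)=P^i(r,n)$, using $P(n,n-1)=1$. The constraint $r+(n-1)+(i+1)\le 2n-1$ forces $r\le n-1-i$, so $n-r\ge i+1$; since $P^i$ is $i$-diagonal by Corollary \ref{jdiag}, this forces $P^i(r,n)=0$. Hence the two sums coincide and the induction closes. I expect all the difficulty to be concentrated in tracking which intermediate indices $t$ are covered by the inductive hypothesis and in this final vanishing, rather than in any computation.
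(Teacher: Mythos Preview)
Your proposal is correct and follows precisely the approach the paper indicates: induction on $i$ via $(P_n)^{i+1}=(P_n)^iP_n$, with the boundary term at $t=n$ killed by the $i$-diagonality of $P^i$ (Corollary~\ref{jdiag}). The paper gives only this one-line hint, and you have supplied exactly the details it omits.
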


The next proposition provides the required lower bound on $\nu_2(P^{d_4}-I)$ mentioned before.
When writing a matrix in block form, an entry consisting of an integer is understood to mean a matrix (of the appropriate size)
all of whose entries are equal to that integer.

\begin{proposition} \label{m4}
\begin{equation} \label{P4lb}
\nu_2\left(P^{d_4}-I\right)\geq
\left(\begin{array}{ccccc}
 \overbrace{3}^{8\times 8}& \overbrace{7}^{8\times 24}&  \overbrace{11}^{8\times 24} &  \overbrace{11}^{8\times 24} & \overbrace{\infty}^{8\times 24}\\
\cline{2-5}
1& 1 & 1& 1& 1\\
0&0&1&1&1\\
0&0&0&1&1\\
\infty &0&0&0&1
\end{array}\right).
\end{equation}
\end{proposition}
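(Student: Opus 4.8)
The plan is to verify the displayed lower bound one block at a time, matching each block of the matrix on the right of \eqref{P4lb} to a structural lemma already proved, and isolating a single finite computation for the two top-left blocks that no general lemma controls. Write $d_4=48$; by Corollary \ref{jdiag} the matrix $P^{48}$ is $48$-diagonal, and $P^{48}-I$ agrees with $P^{48}$ off the main diagonal. Since Lemma \ref{m7}(b) is stated for $P^{48}[8]-I$, the subtraction of $I$ is already built into the diagonal-block statements I will invoke.

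First I would dispose of the forced zeros. Because $P^{48}$ is $48$-diagonal, $(P^{48}-I)(r,s)=0$ whenever $|r-s|>48$; this covers every block marked $\infty$, for instance rows $0\le r\le 7$ with $s\ge 80$ (where $s-r\ge 73$) and the lower-left block where $r-s\ge 73$. The blocks marked $0$ need no argument, since $\nu_2\ge 0$ always. For the blocks marked $11$, namely rows $0\le r\le 7$ and columns $s\ge 32=d_3+8$, I would apply Lemma \ref{m7}(c) with $m=4$ to get $\nu_2(P^{48}(r,s))\ge m+7=11$ directly; these entries are off the diagonal, so the same bound holds for $P^{48}-I$ (and several are in fact $0$, consistent with $\infty\ge 11$).

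The entries marked $1$ are exactly the even ones provided by Lemma \ref{m7}(b). In the $8$-block form that lemma gives evenness of all upper, the main, and the first three lower block-diagonals of $P^{48}[8]-I$. I would check that each $24$-block marked $1$, namely the diagonal and super-diagonal $24$-blocks together with the block in rows $8\le r\le 31$, columns $0\le s\le 7$, consists of $8$-block entries whose block-offset is upper or at most three below the diagonal, so Lemma \ref{m7}(b) yields $\nu_2\ge 1$ throughout; the neighboring blocks marked $0$ contain $8$-block offsets of four or more below the diagonal, which the lemma does not control, so only the trivial bound is asserted there.

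The remaining input, and the genuine obstacle, is the pair of corner blocks: the top-left $8\times 8$ block with claimed valuation $3$, and the $8\times 24$ block in rows $0\le r\le 7$, columns $8\le s\le 31$ with claimed valuation $7$. Here Lemma \ref{TR} gives only $\nu_2(P^{48}(r,r+i))\ge\nu_2(i!)$, which vanishes when $i=1$, and Lemma \ref{m7}(b) gives only evenness, so no structural lemma reaches $3$ or $7$. Instead I would compute these entries on a finite truncation. By Lemma \ref{Pn}, $(P_n)^{48}(r,s)=P^{48}(r,s)$ whenever $0\le r,s\le n-1$ and $r+s+48\le 2n-1$; since $r+s\le 7+31=38$ in this region, any $n\ge 44$ suffices. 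I would have Mathematica form $(P_{44})^{48}-I$, reduce modulo $2^7$, and verify that its top-left $8\times 8$ corner vanishes modulo $2^3$ and that its entries in rows $0$ to $7$, columns $8$ to $31$ vanish modulo $2^7$, giving the valuations $3$ and $7$. Raising this $44\times 44$ integer matrix to the $48$th power while tracking $2$-adic valuations is the computational heart of the argument, and is precisely the step delegated to Mathematica. Collecting all the cases then yields \eqref{P4lb}.
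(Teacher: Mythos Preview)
Your proposal is correct and follows essentially the same route as the paper: use Lemma~\ref{m7}(c) for the blocks marked $11$, Lemma~\ref{m7}(b) for the blocks marked $1$ (with the $0$ and $\infty$ blocks handled trivially), and then invoke Lemma~\ref{Pn} with $n=44$ together with a Mathematica computation of $(P_{44})^{48}\pmod{2^7}$ to obtain the bounds $3$ and $7$ on the two top-left blocks. Your write-up is in fact a bit more explicit than the paper's about the $\infty$/$0$ blocks and about matching the $24$-blocks marked $1$ to the $8$-block offsets covered by Lemma~\ref{m7}(b), but the argument is the same.
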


\begin{proof}
Lemma \ref{m7} (c) shows that a lower bound for the third and fourth entry of the top row of $P^{d_4}$ is $4+7=11$.

From Lemma \ref{m7} (b), all entries on the main and upper diagonals of $P^{d_4}-I$, as well as all entries in the top left block of size $32\times 8$,
 are even. So it remains to show that
lower bounds for the 2-adic valuation of the first two entries of the top row are given by $3$ and $7$, respectively. According to Lemma \ref{Pn}, all $(r,s)$ entries
in the $48$-th  power of $P_n$ are equal to $P^{48}(r,s)$ provided $r+s+48\leq 2n-1$. The relevant bounds for $r$ and $s$ are $r\leq 7$ and $s\leq 31$,
hence $7+31+48\leq 2n-1$ gives $n\geq 44$. A computation with Mathematica of the $48$-th power of the top left corner of $P$ of size $44\times 44$ $\pmod{2^7}$
completes the proof.
\end{proof}

\begin{proposition} \label{Pdm}
If $m\geq 4$, then
\begin{equation} \label{Prop}
\nu_2\left(P^{d_m}-I\right)\geq
 \begin{pmatrix}
 \overbrace{m-1}^{8\times 8}& \overbrace{m+3}^{8\times d_{m-1}}&  \overbrace{m+7}^{8\times d_{m-1}} &  \overbrace{m+7}^{8\times d_{m-1}} & \overbrace{\infty}^{8\times d_{m-1}}\\
 \cline{2-5}
 1&1&1&1&1\\
 0&0&1&1&1\\
 0&0&0&1&1\\
 \infty&0&0&0&1
\end{pmatrix} .
\end{equation}
\end{proposition}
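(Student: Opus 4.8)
The plan is to prove (\ref{Prop}) by induction on $m\ge 4$. The base case $m=4$ is exactly Proposition \ref{m4}: substituting $m=4$ gives $d_{m-1}=d_3=24$ together with $m-1=3$, $m+3=7$, $m+7=11$, so the two displayed matrices coincide. Thus only the inductive step requires work.

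For the inductive step I would exploit the identity $P^{d_{m+1}}=(P^{d_m})^2$, valid because $d_{m+1}=2d_m$. Writing $M_m$ for the right-hand side of (\ref{Prop}) at level $m$, the induction hypothesis reads $\nu_2(P^{d_m}-I)\ge M_m$, and Lemma \ref{sqr} applied with $A=P^{d_m}$ yields at once
\[\nu_2\left(P^{d_{m+1}}-I\right)\ \ge\ \min\{M_m+\mbox{\boldmath $1$},\,M_m*M_m\}.\]
Everything then reduces to the purely combinatorial inequality $\min\{M_m+\mbox{\boldmath $1$},M_m*M_m\}\ge M_{m+1}$, which I would establish by showing that \emph{both} matrices dominate $M_{m+1}$ entrywise. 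The structural subtlety is that block sizes change: $M_m$ uses blocks of sizes $8,d_{m-1},d_{m-1},\dots$ while $M_{m+1}$ uses $8,d_m,d_m,\dots$, and since $d_m=2d_{m-1}$ each block of $M_{m+1}$ is the union of one or two consecutive blocks of $M_m$. Keeping track of this doubling is the main bookkeeping device throughout.

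The inequality $M_m+\mbox{\boldmath $1$}\ge M_{m+1}$ is the easy half. Adding $1$ to the top-row bounds $m-1,m+3,m+7,\infty$ of $M_m$ produces $m,m+4,m+8,\infty$, and after matching each $M_{m+1}$ block against the one or two $M_m$ blocks covering it, these dominate the targets $m,m+4,m+8$ of $M_{m+1}$; the interior and lower-triangular entries (all $0$ or $1$) are handled the same way, and wherever $M_{m+1}$ is finite but $M_m$ is already $\infty$ (because the band of $P^{d_{m+1}}$ is wider than that of $P^{d_m}$) the inequality is automatic.

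The inequality $M_m*M_m\ge M_{m+1}$ carries the real content, and I expect it to be the main obstacle. Here one must bound $(M_m*M_m)(r,s)=\min_t\{M_m(r,t)+M_m(t,s)\}$ from below by splitting the intermediate index $t$ according to its block. For the diagonal, near-diagonal, and lower-triangular targets the displayed bounds of $M_m$ already suffice, using that the main diagonal, the three lower diagonals, and all upper diagonals of $P^{d_m}-I$ remain even — a property that propagates under squaring exactly as in Lemmas \ref{A4} and \ref{m7}. The delicate targets are the off-diagonal top-row entries $m+4$ and $m+8$: for certain short intermediate indices $t$ the naive partial sum, read off from the weak interior value $1$, can be as small as $m+4$, which is below $m+8$. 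The resolution — and the crux of the argument — is that any such $t$ forces the factor $M_m(t,s)$ to be a strictly upper-triangular entry lying more than a full $d_{m-1}$-block above the diagonal, so that $s-t\ge d_{m-1}+1=2^m+2^{m-1}+1$; Lemma \ref{TR} together with Legendre's formula (\ref{Leg}) then replaces the value $1$ by $\nu_2((s-t)!)\ge 3\cdot 2^{m-1}-2$, which for $m\ge 4$ far exceeds every target. Combined with the band cut-off (entries with $|r-s|>d_{m+1}$ vanish, in the spirit of Lemma \ref{m7}(c)), this shows that every intermediate route meets the required bound. The remaining source of friction is that the leading block has fixed size $8$ and does not double with the others, so routes passing through it must be checked separately from the uniform interior estimate; this is routine but is where most of the case analysis lives.
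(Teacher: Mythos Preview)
Your overall strategy coincides with the paper's --- induction on $m$ with base case Proposition~\ref{m4} and inductive step via Lemma~\ref{sqr} applied to $A=P^{d_m}$ --- but there is a logical slip in the framing that the paper avoids. You assert that the inductive step reduces to the ``purely combinatorial inequality'' $\min\{M_m+\mbox{\boldmath $1$},\,M_m*M_m\}\ge M_{m+1}$, to be proved by showing that each of $M_m+\mbox{\boldmath $1$}$ and $M_m*M_m$ dominates $M_{m+1}$. The first of these is fine, but the second is \emph{false} as a statement about the valuation matrices $M_m$ alone. The paper in fact computes $\min\{M_m+\mbox{\boldmath $1$},\,M_m*M_m\}$ explicitly: its top row, in blocks of size $8,d_{m-1},\dots,d_{m-1}$, reads $m,\,m+4,\,m+4,\,m+4,\,m+4,\,m+5,\,m+5,\,\infty$. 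After regrouping into $d_m$-blocks this yields only $m+4$ and $m+5$ in the third and fourth top-row positions, well short of the $(m+1)+7=m+8$ that $M_{m+1}$ demands there.

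Your ``resolution'' via Lemma~\ref{TR} and Legendre's formula is correct mathematics, but it is not establishing the combinatorial inequality you announced: it is a \emph{direct} estimate on $\nu_2\bigl(P^{d_m}(t,s)\bigr)$ that is strictly sharper than anything recorded in $M_m$. The paper uses exactly this idea, but cleanly and outside the induction: it observes at the outset that Lemma~\ref{m7}(c) already gives the bound $m+7$ for the third and fourth top-row blocks \emph{directly for every $m\ge 3$}. The inductive step via Lemma~\ref{sqr} is then only asked to certify the remaining entries --- the values $m-1$ and $m+3$ in the top row and the periodic interior pattern --- for which the displayed lower bound on $\min\{M_m+\mbox{\boldmath $1$},\,M_m*M_m\}$ is already sufficient. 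Separating the two arguments in this way removes the inconsistency in your presentation and eliminates most of the case analysis you anticipated having to carry out inside the $*$-product.
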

\begin{proof}
Let $M$ be the right side of (\ref{Prop}). As in the case $m=4$, Lemma \ref{m7} (c)  establishes the
lower bound for the third and fourth entries shown in the top row of $M$.

The rest of the proof is by induction on $m$, with the case $m=4$ given by Proposition \ref{m4}. Suppose that the result holds for some $m\geq 4$. A routine computation shows that a
lower bound for $\min\{M+\mbox{\boldmath $1$},M*M\}$ is given by
\[
 \begin{pmatrix}
 \overbrace{m}^{8\times 8}& \overbrace{m+4}^{8\times d_{m-1}}&  \overbrace{m+4}^{8\times d_{m-1}} &  \overbrace{m+4}^{8\times d_{m-1}} & \overbrace{m+4}^{8\times d_{m-1}}
& \overbrace{m+5}^{8\times d_{m-1}}&\overbrace{m+5}^{8\times d_{m-1}}&\overbrace{\infty}^{8\times d_{m-1}}\\
 \cline{2-8}
 1&1&1&1&1&1&1&1\\
  1&1&1&1&1&1&1&1\\
 0&0&1&1&1&1&1&1\\
 0&0&0&1&1&1&1&1\\
 0&0&0&0&1&1&1&1\\
 0&0&0&0&0&1&1&1\\
 \infty&0&0&0&0&0&1&1
\end{pmatrix}.
\]
Rearrange the block sizes so that on the top row, the first entry is $8\times8$, and the
other entries are $8\times 2d_{m-1}=8\times d_m$, and replace each block with the minimum of its entries. Then an application of Lemma \ref{sqr} gives the result. \end{proof}

By Proposition \ref{m4},  $\left(P^{d_4}-I\right)_8$ is divisible by 8. So we can define
$Q$ to be the matrix with entries in $\{0,1,\ldots, 15\}$ such that $\left(P^{d_4}\right)_8\equiv I+8Q \pmod{2^7}$. The same computations with Mathematica
used for the proof of Proposition \ref{m4} can be used to find the entries of $Q$, that can then be squared to complete the proof of the following lemma.

\begin{lemma}\ \label{Q} With Q defined as above, we have:
\begin{itemize}
\item[(i)]
$Q=\begin{pmatrix}
2& 4& 12& 0&8&8&0&0\\
12&2&0&12&8&0&0&0\\
6&8&10&8&0&8&8&8\\
0&2&8&6&0&0&8&0\\
13&5&0&12&10&12&12&8\\
3&6&14&8&4&2&0&12\\
9&11&13&11&2&0&2&0\\
2&15&11&2&12&6&0&6
\end{pmatrix}.$
\item[(ii)]
$Q^2\equiv 0 \pmod{4}$.
\end{itemize}
\end{lemma}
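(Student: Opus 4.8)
The proof is in essence a verification, and I would organize it in two stages matching the two parts of the statement, reusing machinery already established. For part (i) the plan is to recycle the exact computation underlying Proposition \ref{m4}. By Lemma \ref{Pn} applied with $n=44$ and $i=d_4=48$, the top left $8\times 8$ corner of $(P_{44})^{48}$ agrees with that of $P^{48}$: for $0\le r,s\le 7$ one has $r+s+48\le 14+48=62\le 2\cdot 44-1$, so the finite power faithfully reproduces the relevant entries. Hence computing $(P_{44})^{48}\pmod{2^7}$ with Mathematica determines $(P^{d_4})_8\pmod{2^7}$. Proposition \ref{m4} guarantees $(P^{d_4}-I)_8\equiv 0\pmod 8$, so $\bigl((P^{d_4})_8-I\bigr)/8$ has integer entries; reducing those entries modulo $16$ (the precision that survives after dividing the mod-$2^7$ data by $8$) and choosing representatives in $\{0,\dots,15\}$ yields precisely the matrix $Q$ displayed in the statement. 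Reading off the entries settles (i).

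For part (ii), the first point to pin down is that the assertion does not depend on the choice of representatives for $Q$. If $Q'\equiv Q\pmod{16}$, then a fortiori $Q'\equiv Q\pmod 4$, and since $(Q+4M)^2=Q^2+4QM+4MQ+16M^2\equiv Q^2\pmod 4$ for any integer matrix $M$, the residue $Q^2\pmod 4$ is well defined. It then suffices to square the explicit matrix of part (i) and reduce modulo $4$. I would first reduce $Q$ itself modulo $4$, where it becomes lower triangular with every diagonal entry equal to $2$; writing $Q\equiv 2I+N\pmod 4$ with $N$ strictly lower triangular gives $Q^2\equiv 4I+4N+N^2\equiv N^2\pmod 4$, so the claim collapses to the smaller check $N^2\equiv 0\pmod 4$ for the resulting $8\times 8$ strictly lower triangular matrix $N$. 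This last step is a routine finite computation.

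There is no genuine mathematical obstacle here; both parts are computational once the infinite matrix $P$ has been safely truncated. The points requiring care are purely bookkeeping: verifying that the truncation size $n=44$ is large enough for Lemma \ref{Pn} to cover every entry needed, that the precision $2^7$ indeed survives the division by $8$ so that $Q$ is determined modulo $16$, and that the representative choice is immaterial for the mod-$4$ conclusion (as noted above). The reliance on Mathematica to produce $P^{48}\pmod{2^7}$ is the same dependency already incurred in Proposition \ref{m4}, and, as remarked there, could in principle be eliminated at the cost of a lengthy computation by hand.
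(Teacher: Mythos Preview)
Your proposal is correct and follows essentially the same route as the paper: compute $(P_{44})^{48}\pmod{2^7}$ with Mathematica (the same computation invoked for Proposition~\ref{m4}), read off $Q$, and then square it modulo $4$. Your structural remark that $Q\equiv 2I+N\pmod 4$ with $N$ strictly lower triangular is a pleasant shortcut for part~(ii), but it does not change the underlying argument, which in the paper is simply ``square $Q$ and reduce.''
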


\begin{proposition} \ \label{T8}
For each $m\geq 4$,
$\left(P^{d_m}\right)_8\equiv I +2^{m-1} Q \pmod{2^{m+3}}$.
\end{proposition}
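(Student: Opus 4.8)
The plan is to argue by induction on $m$, exploiting that $d_{m+1}=2d_m$, so that $P^{d_{m+1}}=\left(P^{d_m}\right)^2$. The base case $m=4$ is nothing but the definition of $Q$ together with Lemma \ref{Q}(i): since $\left(P^{d_4}\right)_8\equiv I+8Q\pmod{2^7}$ and $8=2^{4-1}$, $2^7=2^{4+3}$, the statement holds for $m=4$. For the inductive step I would write $A=P^{d_m}$ and assume $\left(A\right)_8\equiv I+2^{m-1}Q\pmod{2^{m+3}}$. The first point to make is that the top-left $8\times 8$ corner of $A^2$ is \emph{not} the square of the corner of $A$, because the matrices are infinite; rather, for $0\le r,s\le 7$,
\[
\left(A^2\right)_8(r,s)=\left(\left(A\right)_8\right)^2(r,s)+\sum_{t\ge 8}A(r,t)\,A(t,s).
\]

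I would treat the two terms separately. For the corner term, write $\left(A\right)_8=I+2^{m-1}Q+2^{m+3}E$ with $E$ an integer matrix; expanding $\left(\left(A\right)_8\right)^2$ and using $m\ge 4$ shows that every summand except $I+2^mQ$ is divisible by $2^{m+4}$. The only delicate contribution is $\left(2^{m-1}Q\right)^2=2^{2m-2}Q^2$, and this is exactly where Lemma \ref{Q}(ii) is used: since $Q^2\equiv 0\pmod 4$ we get $2^{2m-2}Q^2\equiv 0\pmod{2^{2m}}$, and $2m\ge m+4$. Hence $\left(\left(A\right)_8\right)^2\equiv I+2^mQ\pmod{2^{m+4}}$.

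It remains to show that the tail $\sum_{t\ge 8}A(r,t)A(t,s)$ vanishes modulo $2^{m+4}$, and this is the main obstacle: the bound $\nu_2(A(r,t))\ge m+3$ coming from the second block-column of Proposition \ref{Pdm} yields valuation only $m+3$ for a typical term, which is one short. The resolution is to use \emph{both} factors of each product. Since $A$ is $d_m$-diagonal and $d_m=2d_{m-1}$, the sum runs only over $8\le t\le 7+d_m=7+2d_{m-1}$, which I would split according to the block structure of Proposition \ref{Pdm}. For $t$ in the first tail block $[8,\,7+d_{m-1}]$, the bound $\nu_2(A(r,t))\ge m+3$ is combined with the fact that the $(1,0)$ block of $\nu_2(A-I)$ has value $1$, i.e. $\nu_2(A(t,s))\ge 1$, giving $\nu_2(A(r,t)A(t,s))\ge m+4$; for $t$ in the next block $[8+d_{m-1},\,7+2d_{m-1}]$, the column index lies in the third block-column, where $\nu_2(A(r,t))\ge m+7\ge m+4$, so the product is divisible by $2^{m+4}$ regardless of $A(t,s)$. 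Thus every term of the tail has valuation at least $m+4$, the tail is $\equiv 0\pmod{2^{m+4}}$, and
\[
\left(P^{d_{m+1}}\right)_8\equiv I+2^mQ\pmod{2^{m+4}},
\]
which is precisely the assertion for $m+1$, completing the induction.
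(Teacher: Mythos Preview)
Your proof is correct and follows essentially the same route as the paper's: induction on $m$ with base case given by the definition of $Q$, squaring $P^{d_m}$ and splitting $(P^{d_{m+1}})_8$ into $((P^{d_m})_8)^2$ plus a tail over $t\ge 8$, handling the corner via Lemma~\ref{Q}(ii), and bounding the tail by splitting at $t=8+d_{m-1}$ and invoking Proposition~\ref{Pdm}. The only cosmetic difference is that your upper limit $7+2d_{m-1}$ for the tail is in fact sharper than the paper's $d_m$.
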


\begin{proof}
We write $O(2^k)$ to denote any number (or matrix) that is zero modulo $2^k$.
Note that
\[\left(P^{d_{m+1}}\right)_8=\left(\left(P^{d_m}\right)_8\right)^2 +\sum_{t=8}^{d_m} P^{d_m}(r,t)P^{d_m}(t,s).\]
By induction, and using Lemma \ref{Q},
\begin{eqnarray*}
\left(\left(P^{d_m}\right)_8\right)^2&=&\left(I+2^{m-1}Q+O\left(2^{m+3}\right)\right)^2=I+2^m Q+2^{2m-2}Q^2+O\left(2^{m+4}\right)\\
&=&I+2^mQ +O\left(2^{2m}\right) + O\left(2^{m+4}\right)\equiv I+2^mQ \pmod{2^{m+4}}.
\end{eqnarray*}
Also, using Proposition \ref{Pdm},
\begin{eqnarray*}
\sum_{t=8}^{d_m} P^{d_m}(r,t)P^{d_m}(t,s)&=&\sum_{t=8}^{7+d_{m-1}} P^{d_m}(r,t)P^{d_m}(t,s)+
\sum_{t=8+d_{m-1}}^{d_m}  P^{d_m}(r,t)P^{d_m}(t,s)\\
&=&\sum_{t=8}^{7+d_{m-1}} O\left(2^{m+3}\right)O(2)+\sum_{t=8+d_{m-1}}^{d_m} O\left(2^{m+4}\right)\\
&\equiv& 0 \pmod{2^{m+4}}.
\end{eqnarray*}
So the result follows.
\end{proof}

\begin{corollary} \label{ndm}
For each positive integer $n$ and all $m\geq 4$,
\[\left(P^{nd_m}\right)_8\equiv I + n2^{m-1}Q \pmod{2^{m+3}}\]
and
\[P^{nd_m}(r,s)\equiv 0 \pmod{2^{m+3}} \ \ \mbox{for all  } 0\leq r \leq 7, \ \ s\geq 8\]
\end{corollary}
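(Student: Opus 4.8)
The plan is to prove both assertions simultaneously by induction on $n$, with the base case $n=1$ supplied by Propositions \ref{Pdm} and \ref{T8}. I would first dispose of the second (off-diagonal) assertion, since it is both self-contained and exactly what makes the block multiplication in the first assertion behave well. For its base case, observe that for $0\le r\le 7$ and $s\ge 8$ one has $P^{d_m}(r,s)=(P^{d_m}-I)(r,s)$, because the identity contributes nothing off the diagonal, and every top-row block of the bound in Proposition \ref{Pdm} with column index at least $8$ has valuation at least $m+3$ (the minimum of $m+3,m+7,m+7,\infty$); hence $P^{d_m}(r,s)\equiv 0\pmod{2^{m+3}}$. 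Note also that since $P^{d_m}$ and $P^{nd_m}$ are $d_m$- and $nd_m$-diagonal respectively, every row and column has only finitely many nonzero entries, so all the matrix-product sums below are finite.

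For the inductive step of the second assertion, I would write $P^{(n+1)d_m}(r,s)=\sum_{t\ge 0}P^{nd_m}(r,t)P^{d_m}(t,s)$ for $0\le r\le 7$ and $s\ge 8$, and split the sum at $t=8$. In the range $t\ge 8$ the factor $P^{nd_m}(r,t)$ is $\equiv 0\pmod{2^{m+3}}$ by the induction hypothesis; in the range $0\le t\le 7$ the factor $P^{d_m}(t,s)$ is $\equiv 0\pmod{2^{m+3}}$ by the base case just established (applied with $t$ in the role of $r$). Either way every term is divisible by $2^{m+3}$, which gives the second assertion for $n+1$.

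With the off-diagonal statement in hand, the first assertion follows by a parallel induction. The key point is that, modulo $2^{m+3}$, the top-left corner multiplies as if it were a finite matrix: splitting $P^{(n+1)d_m}(r,s)=\sum_{t=0}^{7}P^{nd_m}(r,t)P^{d_m}(t,s)+\sum_{t\ge 8}P^{nd_m}(r,t)P^{d_m}(t,s)$ for $0\le r,s\le 7$, the second sum vanishes modulo $2^{m+3}$ precisely by the off-diagonal assertion for $P^{nd_m}$, so that $\left(P^{(n+1)d_m}\right)_8\equiv \left(P^{nd_m}\right)_8\left(P^{d_m}\right)_8\pmod{2^{m+3}}$. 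Substituting $\left(P^{nd_m}\right)_8\equiv I+n2^{m-1}Q$ (induction hypothesis) and $\left(P^{d_m}\right)_8\equiv I+2^{m-1}Q$ (Proposition \ref{T8}) yields $I+(n+1)2^{m-1}Q+n2^{2m-2}Q^2$. Finally $Q^2\equiv 0\pmod 4$ by Lemma \ref{Q}(ii), and $2m-2+2=2m\ge m+3$ for $m\ge 4$, so the $Q^2$ term is killed and I obtain $\left(P^{(n+1)d_m}\right)_8\equiv I+(n+1)2^{m-1}Q\pmod{2^{m+3}}$.

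The only genuinely delicate point I anticipate is the vanishing of the cross terms $\sum_{t\ge 8}P^{nd_m}(r,t)P^{d_m}(t,s)$ modulo $2^{m+3}$: this is exactly the reason the off-diagonal assertion must be carried through the induction rather than deduced afterward, and one must first note that the sum is finite (guaranteed by the banded structure) before invoking the per-term divisibility. Everything else reduces to the routine bookkeeping with the inequality $2m\ge m+3$ and the congruence $Q^2\equiv 0\pmod 4$.
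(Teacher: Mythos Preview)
Your argument is correct. The only organizational difference from the paper is that you run a direct induction on $n$, carrying both assertions simultaneously, whereas the paper packages $P^{d_m}\equiv I+A\pmod{2^{m+3}}$ with $A$ having zero top-right block, observes that $A^k$ has vanishing top row for $k\ge 2$, and then applies the binomial expansion $(I+A)^n=I+nA+\sum_{k\ge 2}\binom{n}{k}A^k$; the two routes use the same ingredients (the off-diagonal bound from Proposition~\ref{Pdm}, the congruence from Proposition~\ref{T8}, and $Q^2\equiv 0\pmod 4$) and differ only in bookkeeping.
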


\begin{proof}
It follows from Proposition \ref{Pdm} and Proposition \ref{T8} that there are matrices $X$, $Y$ such that
\[P^{d_m}\equiv I +
\begin{pmatrix}   \overbrace{2^{m-1}Q}^{8\times 8} &  \overbrace{0}^{8\times \infty}\\
X & Y
\end{pmatrix} \pmod{2^{m+3}}.
\]
If $A$ is the matrix on the right side of the above congruence, then
$A^k$ has the block form
\[A^k\equiv \begin{pmatrix}   \overbrace{0}^{8\times 8} &  \overbrace{0}^{8\times \infty}\\
X_k & Y_k
\end{pmatrix} \pmod{2^{m+3}}\]
for all   $k>1$.
Since
\[P^{nd_m} \equiv \left(I + A\right)^n =I+nA+\sum_{k=2}^n\binom{n}{k} A^k \pmod{2^{m+3}},\]
the result follows.
\end{proof}

\section{The 2-adic valuation of $B^{\pm}(24{\MakeLowercase n}+2)$}
In this section the information on $P^{d_m} \pmod{2^{m+3}}$ found in the previous section is used to derive results on the complementary Bell numbers. We will use the notation  $v_j=(Q(P^j)_8)(0,0)$, $j\geq 1$.

Since $B^{\pm}(j)=P^j(0,0)$, and $d_m\equiv 0 \pmod{48}$ for $m\geq 4$,  Corollary \ref{ndm} provides the  2-adic valuation of
$B^{\pm}(n)-1$ when $n\equiv 0 \pmod{48}$.  In order to
obtain results on other residue classes modulo 48, the following lemma will be needed.

\begin{lemma} \label{Vj} For $j\geq 0$,
\begin{eqnarray*}v_j&\equiv&11 B^{\pm}(j+1) +14B^{\pm}(j+2)+9 B^{\pm}(j+3)\\
&+&6B^{\pm}(j+4) + B^{\pm}(j+5) \pmod{16}\end{eqnarray*}
\end{lemma}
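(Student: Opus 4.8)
The plan is to expand $v_j$ against the first row of $Q$ and then rewrite each entry of the zeroth column of $P^j$ that appears in terms of complementary Bell numbers. Because $Q$ is supported on rows and columns $0,\dots,7$ and $(P^j)_8(t,0)=P^j(t,0)$ for $0\leq t\leq 7$, the definition of $v_j$ gives $v_j=\sum_{t=0}^{7}Q(0,t)\,P^j(t,0)$; by Lemma \ref{Q}(i) the first row of $Q$ is $(2,4,12,0,8,8,0,0)$, so
\[
v_j=2P^j(0,0)+4P^j(1,0)+12P^j(2,0)+8P^j(4,0)+8P^j(5,0),
\]
and only the five entries $P^j(r,0)$ with $r\in\{0,1,2,4,5\}$ are needed.

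Next I would express these entries through $B^{\pm}(j),\dots,B^{\pm}(j+5)$. By the proof of Proposition \ref{P}, $P^j(r,0)$ is the coefficient of the descending factorial $(x)_r$ in $\lambda_j(x)$, so $\lambda_j(x)=\sum_r P^j(r,0)(x)_r$; applying the forward difference $\Delta$ (for which $\Delta(x)_r=r(x)_{r-1}$ by Lemma \ref{2}) and evaluating at $0$ gives $r!\,P^j(r,0)=\Delta^r\lambda_j(0)=\sum_{i=0}^r(-1)^{r-i}\binom{r}{i}\lambda_j(i)$. The values $\lambda_j(0),\dots,\lambda_j(5)$ come from solving the triangular system of Lemma \ref{b} with $n=j$ and $\ell=0,1,\dots,5$: since $S(\ell,\ell)=1$ and $S(\ell,0)=0$ for $\ell\geq 1$, each $\lambda_j(\ell)$ is an explicit integer combination of $B^{\pm}(j),\dots,B^{\pm}(j+\ell)$. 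Carrying this out yields $P^j(0,0)=B^{\pm}(j)$, $P^j(1,0)=-B^{\pm}(j)-B^{\pm}(j+1)$, $2P^j(2,0)=B^{\pm}(j)+B^{\pm}(j+1)+B^{\pm}(j+2)$, and, most importantly, $24P^j(4,0)=B^{\pm}(j)+5B^{\pm}(j+2)-2B^{\pm}(j+3)+B^{\pm}(j+4)$ together with $120P^j(5,0)=-\bigl(B^{\pm}(j)+9B^{\pm}(j+1)-15B^{\pm}(j+2)+15B^{\pm}(j+3)-5B^{\pm}(j+4)+B^{\pm}(j+5)\bigr)$.

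To reduce modulo $16$, the first three terms are already integer combinations, with $12P^j(2,0)=6\bigl(B^{\pm}(j)+B^{\pm}(j+1)+B^{\pm}(j+2)\bigr)$. The clean device for the last two is that $24\equiv 120\equiv 8\pmod{16}$, so that $16P^j(r,0)\equiv 0$ forces $8P^j(4,0)\equiv 24P^j(4,0)$ and $8P^j(5,0)\equiv 120P^j(5,0)\pmod{16}$; this turns the fractional-looking contributions into the integer combinations just displayed. Substituting and collecting coefficients modulo $16$ gives
\[
v_j\equiv 4B^{\pm}(j)+9B^{\pm}(j+1)+10B^{\pm}(j+2)+15B^{\pm}(j+3)+6B^{\pm}(j+4)+15B^{\pm}(j+5)\pmod{16}.
\]

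The final step, and the one I expect to be the real obstacle, is to remove the unwanted $B^{\pm}(j)$ term so as to match the stated form. Since $P$ has integer entries, $P^j(4,0)$ and $P^j(5,0)$ are integers, so $24\mid 24P^j(4,0)$ and $120\mid 120P^j(5,0)$; reducing these two divisibilities modulo $8$ and eliminating $B^{\pm}(j+4)$ between them produces a linear congruence among $B^{\pm}(j),\dots,B^{\pm}(j+5)$ that is equivalent to $4B^{\pm}(j)\equiv 2B^{\pm}(j+1)+4B^{\pm}(j+2)-6B^{\pm}(j+3)-14B^{\pm}(j+5)\pmod{16}$. Substituting this into the previous display collapses it exactly to $11B^{\pm}(j+1)+14B^{\pm}(j+2)+9B^{\pm}(j+3)+6B^{\pm}(j+4)+B^{\pm}(j+5)$, as claimed. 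The delicate points throughout are the bookkeeping of the factorial denominators in the formulas for $P^j(4,0)$ and $P^j(5,0)$ and the precise combination of the integrality relations; the case $j=0$, where $(P^0)_8=I$ and $v_0=Q(0,0)=2$, serves as a convenient numerical check on the coefficients.
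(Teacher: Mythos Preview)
Your proof is correct and follows essentially the same route as the paper: expand $v_j$ along the first row of $Q$, express the entries $P^j(r,0)$ (equivalently $P^j(0,r)$, via the symmetry of Lemma~\ref{sym}) as integer combinations of $B^{\pm}(j),\dots,B^{\pm}(j+5)$, and reduce modulo~$16$. Your finite-difference derivation of $r!\,P^j(r,0)=\Delta^r\lambda_j(0)$ is a nice alternative to the paper's step-by-step recurrence for $P^j(0,r)$, but it produces the same formulas.

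The one substantive difference is in how the $B^{\pm}(j)$ term is made to disappear. The paper multiplies the whole sum by $5!=120$ at the outset, obtaining $15\,v_j = 64\,B^{\pm}(j)+21\,B^{\pm}(j+1)+\cdots$ with integer coefficients; since $64\equiv 0\pmod{16}$ the $B^{\pm}(j)$ term drops out immediately, and multiplying by $15^{-1}\equiv -1$ gives the stated congruence in one stroke. Your route, using $8\equiv 24\equiv 120\pmod{16}$, leaves a residual $4B^{\pm}(j)$ that you then eliminate via the auxiliary mod-$8$ relation coming from the integrality of $P^j(4,0)$ and $P^j(5,0)$. This works (and your relation is indeed equivalent to the one obtained by combining the two divisibilities), but it is a detour: the paper's single multiplication by $5!$ accomplishes the same cancellation more directly.
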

\begin{proof}
From Lemma \ref{Q}, $Q(0,r)\equiv 0 \pmod{16}$ for $6\leq r \leq 7$. Hence
\[v_j\equiv\sum_{r=0}^5 Q(0,r)P^j(r,0) \pmod{16}.\]
The proof  relies on expressing
$P^j(0,r)$, for $0\leq r \leq 5$, as a linear combination of $B^{\pm}(j+i)$, $0\leq i \leq r$.
The computation
\[B^{\pm}(j+1)=P^{j+1}(0,0)=P^j(0,0)P(0,0)+P^j(0,1)P(1,0)=-B^{\pm}(j)+P^j(0,1)\]
finds such linear combination for $r=1$, that is
\[P^j(0,1)=B^{\pm}(j)+B^{\pm}(j+1).\]
Then, using this equation with $j$ replaced by $j+1$, and expanding the product $P^{j+1}=P^jP$,
the linear combination for $r=2$ is obtained, that is
\[P^j(0,2)=B^{\pm}(j)+B^{\pm}(j+1)+B^{\pm}(j+2).\]
Continuing in this way, the linear combinations for the
 remaining values of $r$ are obtained:
 \begin{align*}
 P^j(0,3)&=B^{\pm}(j)+2B^{\pm}(j+1)+B^{\pm}(j+3)\\
 P^j(0,4)&=B^{\pm}(j)+ 5 B^{\pm}(j+2) - 2 B^{\pm}(j+3) +
 B^{\pm}(j+4)\\
 P^j(0,5)&=B^{\pm}(j)+ 9 B^{\pm}(j+1)- 15 B^{\pm}(j+2)\\
 & +15 B^{\pm}(j+3)- 5 B^{\pm}(j+4)+ B^{\pm}(j+5).
 \end{align*}

 Using Lemma \ref{sym} and some computations,
 \begin{eqnarray*}5! \sum_{r=0}^5 Q(0,r)  P^j(r,0) &=&5! \sum_{r=0}^5 Q(0,r)(-1)^r P^j(0,r)/r!\\
 &= & 8 [64 B^{\pm}(j)+ 21 B^{\pm}(j+1)+ 130 B^{\pm}(j+2)\\
 &-& 25 B^{\pm}(j+3) + 10 B^{\pm}(j+4) - B^{\pm}(j+5)].
 \end{eqnarray*}
 Hence
 \begin{eqnarray*}
 15 \sum_{r=0}^5 Q(0,r)  P^j(r,0)
 &=&  64 B^{\pm}(j)+ 21 B^{\pm}(j+1)+ 130 B^{\pm}(j+2)
 - 25 B^{\pm}(j+3) \\
 &+& 10 B^{\pm}(j+4) - B^{\pm}(j+5).
 \end{eqnarray*}
Considering the last equation modulo 16, the result follows.
\end{proof}

\begin{lemma} \label{qdm4}
 Let $m\geq 0$, $j\geq 0$, and $q\geq 1$. Then
\[B^{\pm}(qd_{m+4}+j)\equiv B^{\pm}(j)+2^{m+3}qv_j \pmod{2^{m+7}}.\]
\end{lemma}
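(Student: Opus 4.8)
The plan is to expand $P^{qd_{m+4}+j}$ as the product $P^{qd_{m+4}}P^j$ and read off the top-left entry. Since $B^{\pm}(qd_{m+4}+j)=P^{qd_{m+4}+j}(0,0)$, and since $P^j$ is $j$-diagonal (so only finitely many entries $P^j(t,0)$ are nonzero), I would write
\[B^{\pm}(qd_{m+4}+j)=\sum_{t\geq 0}P^{qd_{m+4}}(0,t)\,P^j(t,0).\]
The key idea is that everything I need about the left factor is already encoded in the top-left $8\times 8$ corner of $P^{qd_{m+4}}$, together with the fact that its remaining relevant entries are highly divisible by $2$.

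Next I would apply Corollary \ref{ndm} with $n$ replaced by $q$ and with the role of $m$ played by $m+4$; the hypothesis $m\geq 0$ guarantees $m+4\geq 4$, so the corollary applies. This yields the two facts I need, both modulo $2^{m+7}$: first, $\left(P^{qd_{m+4}}\right)_8\equiv I+q2^{m+3}Q$, so that $P^{qd_{m+4}}(0,t)\equiv\delta_{0,t}+q2^{m+3}Q(0,t)$ for $0\leq t\leq 7$; and second, $P^{qd_{m+4}}(0,t)\equiv 0$ for every $t\geq 8$. Because each $P^j(t,0)$ is an integer, the latter fact kills every term with $t\geq 8$ in the sum above, reducing it modulo $2^{m+7}$ to the finite sum over $0\leq t\leq 7$.

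Substituting the first congruence into that truncated sum, the diagonal contribution $\delta_{0,t}$ isolates the single term $P^j(0,0)=B^{\pm}(j)$, while the remaining contribution is $q2^{m+3}\sum_{t=0}^{7}Q(0,t)P^j(t,0)$. It remains only to recognize this sum as $v_j$: since $Q$ and $(P^j)_8$ are $8\times 8$ and $(P^j)_8(t,0)=P^j(t,0)$ for $0\leq t\leq 7$, the definition $v_j=(Q(P^j)_8)(0,0)$ gives exactly $\sum_{t=0}^{7}Q(0,t)P^j(t,0)$. Collecting the two pieces produces $B^{\pm}(j)+2^{m+3}qv_j\pmod{2^{m+7}}$, as claimed.

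I do not expect a genuine obstacle here: the real work has already been done in Corollary \ref{ndm} and in the computation of $Q$ in Lemma \ref{Q}. The only points requiring care are bookkeeping ones --- matching the index shift $m\mapsto m+4$ so that the modulus $2^{(m+4)+3}=2^{m+7}$ comes out correctly, checking that multiplying the congruences by the integers $P^j(t,0)$ and summing finitely many terms preserves the modulus $2^{m+7}$, and correctly identifying the truncated sum with $v_j$ rather than with a longer sum.
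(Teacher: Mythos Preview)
Your proof is correct and follows essentially the same route as the paper: expand $P^{qd_{m+4}+j}=P^{qd_{m+4}}P^j$, use Corollary~\ref{ndm} (with $m\mapsto m+4$) to reduce the row $P^{qd_{m+4}}(0,\cdot)$ modulo $2^{m+7}$ to its first eight entries $I+q2^{m+3}Q$, and read off the $(0,0)$ entry as $B^{\pm}(j)+2^{m+3}qv_j$. If anything, your citation of Corollary~\ref{ndm} is slightly more precise than the paper's appeal to Proposition~\ref{Pdm}, since the corollary is what actually covers the case $q>1$.
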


\begin{proof}
By Proposition \ref{Pdm}, $P^{qd_{m+4}}(0,s)\equiv 0 \pmod{2^{m+7}}$ for all $s\geq 8$. It follows that
\begin{eqnarray*}
(P^{qd_{m+4}+j})_8&=& (P^{qd_{m+4}}P^j)_8\equiv  P_8^{qd_{m+4}} (P^j)_8 \\
&\equiv& \left(I+2^{m+3}qQ\right)(P^j)_8\equiv (P^j)_8+2^{m+3}qQ(P^j)_8 \pmod{2^{m+7}}.
\end{eqnarray*}
Now take the $(0,0)$ entry to get the result.
\end{proof}

\begin{corollary} \label{nu} For $j\geq 0$,
\[\nu_2\left(B^{\pm}(j)\right)=\left\{
\begin{array}{cc} 0 & \mbox{ if } j\equiv  0 \mbox{ or } 1\pmod{3}\\
1 & \mbox{ if } j\equiv 5 \mbox{ or } 8  \pmod{12}\\
2 & \mbox{ if } j\equiv 11  \pmod{12}
\end{array}\right.
\]
\end{corollary}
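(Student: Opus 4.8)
The plan is to exploit the near-periodicity of $B^{\pm}$ modulo a fixed power of $2$ supplied by Lemma \ref{qdm4}, and then to reduce the statement to a finite check. Since the three asserted valuations $0,1,2$ are all detected modulo $8=2^{3}$ (valuation $0$ means odd, valuation $1$ means $\equiv 2\pmod 4$, and valuation $2$ means $\equiv 4\pmod 8$), it suffices to control $B^{\pm}(j)\pmod 8$. Taking $m=0$ and $q=1$ in Lemma \ref{qdm4} gives, for every $j\ge 0$, $B^{\pm}(j+48)\equiv B^{\pm}(j)+2^{3}v_j\equiv B^{\pm}(j)\pmod 8$, because $2^{3}v_j\equiv 0\pmod 8$. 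Hence the sequence $\bigl(B^{\pm}(j)\bmod 8\bigr)_{j\ge 0}$ is periodic with period $d_4=48$.

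Because $3\mid 48$ and $12\mid 48$, each of the residue classes appearing in the statement (those that are $0$ or $1$ modulo $3$, the classes $5,8\pmod{12}$, and the class $11\pmod{12}$) is a union of residue classes modulo $48$. Consequently it suffices to verify the corollary for the $48$ representatives $0\le j\le 47$. For this I would compute $B^{\pm}(j)\bmod 8$ directly, either from the listed values of sequence A000587 or, self-containedly, from $B^{\pm}(j)=P^{j}(0,0)$ using Lemma \ref{Pn} (taking $n=24$, so that $(P_{24})^{j}(0,0)=P^{j}(0,0)$ for all $j\le 47$) and reducing modulo $8$. The outcome to be recorded is that $B^{\pm}(j)$ is odd exactly when $j\equiv 0,1\pmod 3$, that $B^{\pm}(j)\equiv 2\pmod 4$ exactly when $j\equiv 5,8\pmod{12}$, and that $B^{\pm}(j)\equiv 4\pmod 8$ exactly when $j\equiv 11\pmod{12}$; this yields the three stated valuations.

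One point the verification must take seriously is that it is the $2$-adic valuation, not the residue modulo $8$, that is constant on each class: for instance $B^{\pm}(0)\equiv 1$ while $B^{\pm}(12)\equiv 5\pmod 8$, yet both are odd. This is exactly why the coarse period $48$, rather than $12$, is needed for the finite check, even though the final answer is governed by the residue modulo $12$. Finally, the computation will show $B^{\pm}(j)\equiv 0\pmod 8$ on the remaining class $j\equiv 2\pmod{12}$; there the valuation is at least $3$ and is not determined by this argument, and this is precisely the class (in particular $n=2$) that is left to Theorems \ref{24n2} and \ref{24n14}. The main, and essentially only, obstacle is organizing the $48$-term reduction cleanly and not mistaking the variation of residues modulo $8$ for a failure of the valuation pattern.
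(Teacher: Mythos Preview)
Your argument is correct and is essentially identical to the paper's own proof: both invoke Lemma \ref{qdm4} with $m=0$, $q=1$ to obtain $B^{\pm}(j+48)\equiv B^{\pm}(j)\pmod 8$, and then reduce to the finite check $0\le j\le 47$. Your additional remarks (that valuations $0,1,2$ are detected modulo $8$, the use of Lemma \ref{Pn} with $n=24$ for the finite computation, and the observation about the leftover class $j\equiv 2\pmod{12}$) simply flesh out what the paper leaves implicit.
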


\begin{proof}
The previous lemma with $m=0$ and $q=1$ gives
\[B^{\pm}(48+j)\equiv B^{\pm}(j) \pmod{8}.
\]
Now the result follows by computing $\nu_2\left(B^{\pm}(j)\right)$ for $0\leq j \leq 47$.
\end{proof}

The last corollary provides the value of $\nu_2\left(B^{\pm}(j)\right)$ except for the case
$j\equiv 2$ $\pmod{12}$.
The next theorem provides a complete answer for half of the remaining cases, namely the case $j\equiv 2 $ $\pmod{24}$.
\begin{theorem} \label{24n2}
For each $n\geq 0$,
\[\nu_2\left(B^{\pm}(24n+2)\right)=\nu_2(n)+5.\]
\end{theorem}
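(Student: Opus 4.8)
The plan is to reduce the whole statement to a single application of Lemma~\ref{qdm4} with $j=2$, exploiting the fact that $B^{\pm}(2)=0$ (so the ``constant'' term drops out) together with a computation of $\nu_2(v_2)$. First, the case $n=0$ reads $\nu_2(B^{\pm}(2))=\nu_2(0)+5$, which holds by convention since $B^{\pm}(2)=0$ and $\nu_2(0)=\infty$. Next I would pin down $v_2$: by Lemma~\ref{Vj} with $j=2$ and the tabulated values $B^{\pm}(3)=1$, $B^{\pm}(4)=1$, $B^{\pm}(5)=-2$, $B^{\pm}(6)=-9$, $B^{\pm}(7)=-9$, one computes $v_2\equiv 11+14-18-54-9\equiv 8\pmod{16}$, hence $\nu_2(v_2)=3$.

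For the principal case, suppose $n\ge 1$ and set $a=\nu_2(n)$, writing $n=2^a q_0$ with $q_0$ odd. When $a\ge 1$ I would take $m=a-1\ge 0$ and $q=q_0$ in Lemma~\ref{qdm4}, noting that $q_0\,d_{(a-1)+4}=q_0\,d_{a+3}=3\cdot 2^{a+3}q_0=24n$, so that $24n+2=q_0\,d_{(a-1)+4}+2$. The lemma then gives $B^{\pm}(24n+2)\equiv B^{\pm}(2)+2^{a+2}q_0 v_2\equiv 2^{a+2}q_0 v_2\pmod{2^{a+6}}$. Since $q_0$ is odd and $\nu_2(v_2)=3$, the right-hand side has $2$-adic valuation exactly $(a+2)+3=a+5$, strictly below the modulus exponent $a+6$; hence $\nu_2(B^{\pm}(24n+2))=a+5=\nu_2(n)+5$, as desired.

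The remaining, and genuinely harder, case is $n$ odd, where $a=0$ and the choice $m=a-1=-1$ is illegal: indeed $24n$ is not divisible by $d_4=48$, so $j=2$ cannot be used. Instead I would write $24n+2=48\cdot\tfrac{n-1}{2}+26$ and apply Lemma~\ref{qdm4} with $m=0$, $j=26$, $q=\tfrac{n-1}{2}$, obtaining $B^{\pm}(24n+2)\equiv B^{\pm}(26)+8\cdot\tfrac{n-1}{2}\,v_{26}\pmod{128}$. The key point is then to establish, by finite computation of $B^{\pm}(26),\dots,B^{\pm}(31)$ (the latter feeding $v_{26}$ through Lemma~\ref{Vj}), the two facts $B^{\pm}(26)\equiv 32\pmod{64}$ and $\nu_2(v_{26})\ge 3$. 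Given these, the term $8\cdot\tfrac{n-1}{2}\,v_{26}$ vanishes modulo $64$ uniformly in $n$, so $B^{\pm}(24n+2)\equiv 32\pmod{64}$ and $\nu_2(B^{\pm}(24n+2))=5=\nu_2(n)+5$ for every odd $n$.

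The \emph{main obstacle} is exactly this odd case: whereas all $n$ with $\nu_2(n)\ge 1$ are dispatched uniformly by one substitution into Lemma~\ref{qdm4}, the parity obstruction for odd $n$ forces a separate reduction and hinges on the finite verifications $B^{\pm}(26)\equiv 32\pmod{64}$ and $\nu_2(v_{26})\ge 3$. The strict inequality $\nu_2(v_{26})\ge 3$ (rather than $\nu_2(v_{26})=2$) is precisely what guarantees that the valuation is the constant $5$ across all odd $n$, rather than an $n$-dependent alternating value.
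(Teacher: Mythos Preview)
Your proof is correct and mirrors the paper's own argument almost exactly: the same case split into $n=0$, $n$ even (Lemma~\ref{qdm4} with $j=2$, using $B^{\pm}(2)=0$ and $\nu_2(v_2)=3$), and $n$ odd (Lemma~\ref{qdm4} with $m=0$, $j=26$, using $\nu_2(B^{\pm}(26))=5$ and $\nu_2(v_{26})\ge 3$). The only trivial wrinkle is that for $n=1$ your choice $q=\tfrac{n-1}{2}=0$ falls outside the stated hypothesis $q\ge 1$ of Lemma~\ref{qdm4}, but that instance is simply the direct computation $\nu_2(B^{\pm}(26))=5$ you are already invoking.
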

\begin{proof}
If $n=0$, then both sides are $\infty$. Suppose now that $n\geq 1$. If $n$ is even, write $n=2^{m+1}q$, with
$m\geq 0$ and $q$ odd.
Then, using Lemma \ref{qdm4} and the values $B^{\pm}(2)=0$, $v_2=-56\equiv 8 \pmod{16}$,
\[B^{\pm}(24n+2)=B^{\pm}(qd_{m+4}+2)
\equiv B^{\pm}(2)+2^{m+6} q
\equiv  2^{m+6} \pmod{2^{m+7}}. \]
Hence $\nu_2(B^{\pm}(24n+2))=m+6 =\nu_2(n)+5$.

If $n$ is odd, write $n=2q+1$. Then Lemma \ref{qdm4} with $m=0$ gives
\[B^{\pm}(24n+2)= B^{\pm}(48q+26)\equiv B^{\pm}(26)+8q \pmod{2^7}.
\]
Using the computed valuations $\nu_2(B^{\pm}(26))=5$, $\nu_2(v_{26})=3$, it follows that
\[\nu_2(B^{\pm}(24n+2))=5. \]
This completes the proof.
\end{proof}

\section{The case $B^{\pm}(24 {\MakeLowercase n}+14)$}
It remains to consider the case $n\equiv 14 \pmod{24}$. Using Lemma \ref{qdm4} and the computed value
$\nu_2(v_{14})=3$, and proceeding as before, it is easy to see that if $n$ is even, then
$\nu_2(B^{\pm}(24n+14))=5$. But the case $n$ odd, say $n=2k+1$, splits into two cases, giving $\nu_2(B^{\pm}(48k+38))=6$ if
$k$ is odd, and $\nu_2(B^{\pm}(48k+38))\geq 7$ if $k$ is even. In fact, this process will continue indefinitely, as will soon be made precise.
Some values of $B^{\pm}(n)$ $\pmod{16}$ will be needed.

\begin{lemma}
If $j\equiv 38 \pmod{48}$, then
\[
\begin{array}{cc}
\left. \begin{array}{ccc}
B^{\pm}(j+1)&\equiv& 5  \\
B^{\pm}(j+2)&\equiv& 5  \\
B^{\pm}(j+3)&\equiv& 14  \\
B^{\pm}(j+4)&\equiv& 3  \\
B^{\pm}(j+5)&\equiv& 11
\end{array}\right\} .
& \pmod{16}
\end{array}
\]
\end{lemma}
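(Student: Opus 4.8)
The plan is to reduce all infinitely many cases $j\equiv 38\pmod{48}$ to the single base case $j=38$, by exhibiting a period-$48$ behaviour of $B^{\pm}$ modulo $16$ on the relevant residues. Write $j=48t+38$ with $t\geq 0$, so the five quantities to be determined are $B^{\pm}(48t+i)$ for $i\in\{39,40,41,42,43\}$. The main tool will be Lemma \ref{qdm4} with $m=0$ (recall $d_4=48$), which gives
\[
B^{\pm}(48q+i)\equiv B^{\pm}(i)+8q\,v_i \pmod{2^7}
\]
for $q\geq 1$. Reducing this modulo $16$, the correction term $8q\,v_i$ vanishes precisely when $v_i$ is even; so the crux is to verify that $v_i$ is even for each relevant $i$, after which $B^{\pm}(48t+i)\equiv B^{\pm}(i)\pmod{16}$.

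First I would pin down the parity of $v_i$. Reducing the formula of Lemma \ref{Vj} modulo $2$, where the coefficients $11,14,9,6,1$ become $1,0,1,0,1$, yields
\[
v_i\equiv B^{\pm}(i+1)+B^{\pm}(i+3)+B^{\pm}(i+5) \pmod 2.
\]
By Corollary \ref{nu}, $B^{\pm}(n)$ is odd exactly when $n\equiv 0$ or $1\pmod 3$. The three arguments $i+1,i+3,i+5$ are congruent to $i+1,i,i+2\pmod 3$ and so run through all three residue classes modulo $3$; hence exactly one of the three terms is even and two are odd, forcing $v_i\equiv 0\pmod 2$. For the specific $i\in\{39,\ldots,43\}$ the even term always sits at an argument in $\{41,44,47\}$, i.e.\ at a residue $5$, $8$, or $11\pmod{12}$, so that every parity needed here is supplied by Corollary \ref{nu} directly, with no appeal to the undetermined class $n\equiv 2\pmod{12}$.

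With $v_i$ even, the displayed consequence of Lemma \ref{qdm4} collapses to $B^{\pm}(48t+i)\equiv B^{\pm}(i)\pmod{16}$ for every $t\geq 1$, while $t=0$ is the base case itself. It then remains only to evaluate the five base values modulo $16$, which I would obtain by a direct computation of $B^{\pm}(n)$ for $n\leq 43$ (for instance from the recurrence $B^{\pm}(n+1)=-\sum_{k=0}^{n}\binom{n}{k}B^{\pm}(k)$, or equivalently as $P^{n}(0,0)$). These give $B^{\pm}(39),\ldots,B^{\pm}(43)\equiv 5,5,14,3,11\pmod{16}$, which are precisely the asserted values, completing the argument.

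I expect no serious obstacle: the only genuine content is the parity calculation for $v_i$, which rests on the mod-$2$ periodicity of $B^{\pm}$ encoded in Corollary \ref{nu}, together with the routine numerical evaluation of the base values. The single point requiring care is that Lemma \ref{qdm4} is stated for $q\geq 1$, so the residue $j=38$ must be handled as the base computation rather than deduced from the periodicity.
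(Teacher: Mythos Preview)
Your argument is correct and follows essentially the same strategy as the paper: invoke Lemma~\ref{qdm4} with $m=0$ to reduce to the base case $j=38$, and use Lemma~\ref{Vj} together with Corollary~\ref{nu} to see that the correction term $8qv$ vanishes modulo~$16$. In fact you are more precise than the paper on one point: the paper checks only that $v_j$ is even for $j\equiv 38\pmod{48}$, whereas what is actually needed (and what you verify) is that $v_i$ is even for each of $i=39,40,41,42,43$, since those are the residues at which Lemma~\ref{qdm4} must be applied; your observation that the argument congruent to $2\pmod 3$ among $i+1,i+3,i+5$ always lands in $\{5,8,11\}\pmod{12}$, and never in the undetermined class $2\pmod{12}$, is exactly the check that makes this work.
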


\begin{proof}
Using the 2-adic valuations given by Corollary \ref{nu}, $B^{\pm}(j+3)$ is even, while $B^{\pm}(j+1)$ and $B^{\pm}(j+5)$ are odd. It follows
from Lemma \ref{Vj} that $v_j$ is even. Hence the congruence of Lemma \ref{qdm4} (with $m=0$ and $q=1$) gives
\[B^{\pm}(48+j)\equiv B^{\pm}(j) \pmod{16}.\]
Now the result follows by computing $B^{\pm}(38+i)\pmod{16}$ for $1\leq i \leq 5$.
\end{proof}

From this lemma and Lemma \ref{Vj} we immediately get the following consequence:
\begin{corollary} \label{38}
If $j\equiv 38 \pmod{48}$, then $\nu_2(v_j)=3$.
\end{corollary}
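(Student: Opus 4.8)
The plan is to turn the word ``immediately'' into the obvious one-line computation. I would begin by recalling the formula of Lemma \ref{Vj},
\[v_j \equiv 11 B^{\pm}(j+1) + 14 B^{\pm}(j+2) + 9 B^{\pm}(j+3) + 6 B^{\pm}(j+4) + B^{\pm}(j+5) \pmod{16},\]
which is valid for every $j \geq 0$ and in particular for $j \equiv 38 \pmod{48}$. Into this I would substitute the five residues supplied by the preceding lemma, namely $B^{\pm}(j+1) \equiv 5$, $B^{\pm}(j+2) \equiv 5$, $B^{\pm}(j+3) \equiv 14$, $B^{\pm}(j+4) \equiv 3$, and $B^{\pm}(j+5) \equiv 11$, all taken modulo $16$.

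Next I would evaluate the resulting integer combination modulo $16$. Reducing each product first gives $11 \cdot 5 \equiv 7$, $14 \cdot 5 \equiv 6$, $9 \cdot 14 \equiv 14$, $6 \cdot 3 \equiv 2$, and $11$, whose sum is $7 + 6 + 14 + 2 + 11 = 40 \equiv 8 \pmod{16}$. Hence $v_j \equiv 8 \pmod{16}$. From this I would read off the valuation directly: writing $v_j = 8 + 16k = 8(1 + 2k)$ with $1 + 2k$ odd forces $\nu_2(v_j) = 3$ exactly.

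There is no genuine obstacle here; the argument is a single substitution followed by a modular reduction. The one point worth flagging is why the statement yields the \emph{exact} valuation rather than a mere lower bound: it is precisely because the preceding lemma pins down each $B^{\pm}(j+i)$ modulo $16$, and not just modulo $8$, that $v_j$ is determined modulo $16$, so that the residue $8$ rules out any additional factor of $2$. Had we only known the summands modulo $8$, we could conclude $\nu_2(v_j) \geq 3$ but not equality; this is the reason the computation is carried out to modulus $16$ throughout.
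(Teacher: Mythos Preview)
Your proof is correct and matches the paper's approach exactly: the paper merely says the corollary follows ``immediately'' from the preceding lemma and Lemma~\ref{Vj}, and your substitution and reduction modulo $16$ carry out precisely that computation. The added remark explaining why working modulo $16$ (rather than $8$) is needed to pin down the valuation exactly is a helpful clarification.
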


Define the sequence $y_m$ inductively by $y_0=1$,
and
\[y_{m+1} = \left\{ \begin{array}{lr} y_m & \mbox{if} \ \nu_2\left(B^\pm (24y_m+14)\right) > m+5 \\
2^m + y_m & \mbox{if} \ \nu_2\left(B^\pm(24y_m+14)\right) \leq m+5 \end{array}\right. .\]
Then define
\[x_m=24y_m + 14.\]
Note that $y_m$ is non-decreasing, and bounded above by $2^m-1$ for $m\geq 1$.

The following table gives the values of $y_m$ and $x_m$ for $m\leq 18$. This material is based upon work supported by the Louisiana Optical Network Institute (LONI).

\begin{center}
\[\begin{array}{c|ccccccccccc}
m& 0 & 1 & 2 & 3 & 4 & 5 & 6 & 7 & 8 & 9 & 10\\
\hline
y_m & 1 & 1 & 1 & 5 & 13 & 13 & 13 & 77 & 77 & 333 & 845\\
x_m & 38 & 38 & 38 & 134 & 326 & 326 & 326 & 1862 & 1862 & 8006 & 20294
\end{array}
\]
\[
\begin{array}{c|cccccccc}
m& 11 & 12 & 13 & 14 & 15 & 16 & 17 & 18 \\
\hline
y_m & 1869 & 3917 & 8013 & 801 & 24397 & 57165 & 122701 & 122701\\
x_m & 44870 & 94022 & 192326 & 192326 & 585542 & 1371974 & 2944838 & 2944838
\end{array}
\]
\label{LONI}
\end{center}

\begin{remark}
The sequence $B^\pm(x_m)$ seems to grow super-exponentially, as shown in the table below.
\[\begin{array}{c|ccccccccc}
m & 0 & 1 & 2 & 3 & 4 & 5 & 6 & 7 & 8 \\
\hline
\log_{10} |B^\pm(x_m)| & 27.3 &  27.3 & 27.3 & 153.8 & 475.8 &
475.8 & 475.8 & 3908.4 & 3908.4
\end{array}
\]
It has been shown by several authors that $|B^\pm(n)|$ is not bounded by any power of $n$.
A simple proof is given in \cite{Kla}. More precise asymptotic estimates are given in \cite{SV} and \cite{Yang}.
\end{remark}

\begin{lemma} \label{xm}
\[B^{\pm}(24\cdot 2^m n +x_m)\equiv B^{\pm}(x_m) + 2^{m+5}n \pmod{2^{m+6}}.\]
\end{lemma}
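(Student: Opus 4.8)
The plan is to read the claim off Lemma~\ref{qdm4} once the power index $24\cdot 2^m$ is matched to a power of the form $d_{\mu+4}$. Since $24=3\cdot 2^3$, we have $24\cdot 2^m = 3\cdot 2^{m+3}=d_{m+3}=d_{(m-1)+4}$, so for $m\ge 1$ the left-hand side is $B^{\pm}\!\left(n\,d_{(m-1)+4}+x_m\right)$. Applying Lemma~\ref{qdm4} with its index parameter equal to $m-1$, with $q=n$ and $j=x_m$, gives
\[
B^{\pm}(24\cdot 2^m n + x_m)\equiv B^{\pm}(x_m)+2^{m+2}\,n\,v_{x_m}\pmod{2^{m+6}},
\]
and the resulting modulus $2^{(m-1)+7}=2^{m+6}$ is exactly the one wanted.

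It then remains to replace $2^{m+2} n v_{x_m}$ by $2^{m+5}n$ modulo $2^{m+6}$, for which I would show $v_{x_m}\equiv 8\pmod{16}$. First I would check that $x_m\equiv 38\pmod{48}$, which is equivalent to $y_m$ being odd. This follows by induction from $y_0=1$, from the fact that for $m\ge 1$ the defining recurrence adds only the even quantity $2^m$, and from the observation that at the first step $\nu_2(B^{\pm}(38))\ge 7>5$ forces $y_1=y_0=1$ (this is the $k=0$ instance of the splitting recorded at the start of Section~7). Granting $x_m\equiv 38\pmod{48}$, Corollary~\ref{38} yields $\nu_2(v_{x_m})=3$, so $v_{x_m}=8u$ with $u$ odd and hence $v_{x_m}\equiv 8\pmod{16}$. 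Writing $v_{x_m}=8+16t$ gives $2^{m+2}v_{x_m}=2^{m+5}+2^{m+6}t\equiv 2^{m+5}\pmod{2^{m+6}}$, whence $2^{m+2}n v_{x_m}\equiv 2^{m+5}n\pmod{2^{m+6}}$, completing the case $m\ge 1$.

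The main obstacle is the base case $m=0$, since then the index $m-1=-1$ lies outside the range of Lemma~\ref{qdm4} (whose power indices are the $d_k$, $k\ge 4$, all divisible by $48$), whereas $24n=d_3n$ is divisible by $48$ only for $n$ even. I would therefore split on the parity of $n$. For $n$ even, write $24n=48(n/2)$ and apply Lemma~\ref{qdm4} with index $0$, $q=n/2$, $j=38$; since $v_{38}=8u$ with $u$ odd, the correction is $8(n/2)v_{38}=32nu\equiv 32n\pmod{64}$, which matches the claimed $2^5 n$. For $n=2k+1$ odd, rewrite $24n+38=48(k+1)+14$ and apply Lemma~\ref{qdm4} with index $0$, $q=k+1$, $j=14$; the correction $8(k+1)v_{14}$ vanishes modulo $64$ because $\nu_2(v_{14})=3$, leaving $B^{\pm}(24n+38)\equiv B^{\pm}(14)\pmod{64}$, so the claim reduces to the numerical identity $B^{\pm}(14)\equiv B^{\pm}(38)+32\pmod{64}$; this holds because $\nu_2(B^{\pm}(14))=5$ forces $B^{\pm}(14)\equiv 32$, while $\nu_2(B^{\pm}(38))\ge 7$ forces $B^{\pm}(38)\equiv 0$. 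Thus the only delicate points are the bookkeeping of the index shift $\mu=m-1$ and the separate treatment of $m=0$ through these explicitly computed small valuations.
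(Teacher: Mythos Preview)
Your proof is correct and follows essentially the same strategy as the paper: apply Lemma~\ref{qdm4} with the index shift $m\mapsto m-1$ for $m\ge 1$ (using Corollary~\ref{38} to get $\nu_2(v_{x_m})=3$), and treat $m=0$ separately by splitting on the parity of $n$. The only cosmetic difference is in the odd-$n$ subcase at $m=0$: you rewrite $48k+62=48(k+1)+14$ and invoke Lemma~\ref{qdm4} with $j=14$, discarding the correction modulo $2^6$ and reducing to the numerical check $B^{\pm}(14)\equiv B^{\pm}(38)+32\pmod{64}$, whereas the paper keeps $j=62$, works modulo $2^7$, and uses $\nu_2(B^{\pm}(62))=5$ together with $\nu_2(B^{\pm}(38))=7$ to reach the same conclusion.
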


\begin{proof}
Suppose first that $m\geq 1$, and write $k=m-1$. Note that $x_m\equiv 38 \pmod{48}$, and so by Corollary \ref{38} $\nu_2(v_{x_m})=3$. Then Lemma \ref{qdm4} gives
\begin{eqnarray*}
B^{\pm}(24\cdot 2^m n +x_m) &=&B^{\pm}(d_{k+4}+x_m)\\
&\equiv& B^{\pm}(x_m)+2^{k+6}n \pmod{2^{k+7}}\\
&=& B^{\pm}(x_m)+2^{m+5}n \pmod{2^{m+6}}.
\end{eqnarray*}
If $m=0$, then the case $n=2k$ gives
\begin{eqnarray*}
B^{\pm}(24n+38)&=&B^{\pm}(48k+38)\equiv B^{\pm}(38)+2^6k \pmod{2^7}\\
&\equiv& B^{\pm}(38)+2^5n \pmod{2^6},
\end{eqnarray*}
 while the
case $n=2k+1$ gives
\[B^{\pm}(24n+38)=B^{\pm}(48k+62)\equiv B^{\pm}(62)+2^6k \pmod{2^7}.\]
Since $\nu_2(B^{\pm}(62))=5$ and $\nu_2(B^{\pm}(38))=7$, $B^{\pm}(62)+2^6k\equiv B^{\pm}(38)+2^5n \pmod{2^6}$,
and the result follows.
\end{proof}

\begin{lemma}
$\nu_2(B^{\pm}(x_m))\geq m+5$ for all $m\geq 0$.
\end{lemma}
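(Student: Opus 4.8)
The plan is to prove the bound by induction on $m$, exploiting the recursive definition of $y_m$ together with Lemma \ref{xm}. For the base case $m=0$ I note that $x_0=38$, and the value $\nu_2(B^{\pm}(38))=7$ (recorded in the proof of Lemma \ref{xm}) immediately gives $\nu_2(B^{\pm}(x_0))=7\geq 5$.

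For the inductive step I would assume $\nu_2(B^{\pm}(x_m))\geq m+5$ and split according to the two branches in the definition of $y_{m+1}$. If $\nu_2(B^{\pm}(24y_m+14))=\nu_2(B^{\pm}(x_m))>m+5$, then $y_{m+1}=y_m$, so $x_{m+1}=x_m$ and $\nu_2(B^{\pm}(x_{m+1}))=\nu_2(B^{\pm}(x_m))\geq m+6$, which is even stronger than needed. The interesting branch is $\nu_2(B^{\pm}(24y_m+14))\leq m+5$: combined with the inductive hypothesis this pins the valuation down to $\nu_2(B^{\pm}(x_m))=m+5$ exactly, and here $y_{m+1}=2^m+y_m$, so that
\[
x_{m+1}=24(2^m+y_m)+14=24\cdot 2^m\cdot 1+x_m.
\]

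At this point the decisive step is to apply Lemma \ref{xm} with $n=1$, giving $B^{\pm}(x_{m+1})\equiv B^{\pm}(x_m)+2^{m+5}\pmod{2^{m+6}}$. Writing $B^{\pm}(x_m)=2^{m+5}u$ with $u$ odd (possible precisely because the valuation equals $m+5$), the right-hand side becomes $2^{m+5}(u+1)$, which is divisible by $2^{m+6}$ since $u+1$ is even. Hence $\nu_2(B^{\pm}(x_{m+1}))\geq m+6$, closing the induction. I do not expect a genuine obstacle here; the only point that requires care is the bookkeeping observation that the ``right-turn'' branch $y_{m+1}=2^m+y_m$ is exactly the branch in which the inductive hypothesis and the defining inequality together force the valuation to be exactly $m+5$, so that the increment $2^{m+5}$ supplied by Lemma \ref{xm} with $n=1$ raises the valuation by precisely the one unit needed.
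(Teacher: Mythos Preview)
Your proof is correct and follows essentially the same approach as the paper: induction on $m$ with the same base case $\nu_2(B^{\pm}(38))=7$, the same two-branch split, and the same application of Lemma \ref{xm} with $n=1$ in the branch where the valuation equals $m+5$, writing $B^{\pm}(x_m)=2^{m+5}u$ with $u$ odd. The only cosmetic difference is that you explicitly note the combination of the defining inequality $\nu_2\le m+5$ with the inductive hypothesis to force equality, while the paper simply begins that branch with ``If $\nu_2(B^{\pm}(x_m))=m+5$''.
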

\begin{proof}
If $m=0$, then $\nu_2(B^{\pm}(x_0))=\nu_2(B^{\pm}(38))=7$. Assume now that $\nu_2(B^{\pm}(x_m))\geq m+5$. If $\nu_2(B^{\pm}(x_m))> m+5$, then by definition
$x_{m+1}=x_m$, and so $\nu_2(B^{\pm}(x_{m+1}))=\nu_2(B^{\pm}(x_{m}))\geq m+6 =(m+1)+5$. If $\nu_2(B^{\pm}(x_m))= m+5$, then
write $B^{\pm}(x_m)=2^{m+5}q$ with $q$ odd. Then $x_{m+1}=24\cdot 2^m +x_m$, and using Lemma \ref{xm},
\[
B^{\pm}(x_{m+1})\equiv B^{\pm}(x_m)+2^{m+5}
=2^{m+5}q+2^{m+5} \equiv 0 \pmod{2^{m+6}}.
\]
So $\nu_2(B^{\pm}(x_{m+1}))\geq (m+1)+5$.
\end{proof}

Recall that $y_m$ is bounded above by $2^m-1$. Let the binary expansion of $y_m$ (for $m\geq 1$) be
\[y_m=\sum_{i=0}^{m-1} s_{m,i}2^i.\]
Since the binary expansion of $y_{m+1}$ is the same as that of $y_m$ with possibly an extra leading 1, the limit
\[s_i=\lim_{m\rightarrow \infty} s_{m,i}\]
exists for all $i$ (equivalently, the 2-adic limit of $y_m$ exists) . Define the infinite binary sequence (or 2-adic integer)
\[s=(s_0,s_1,s_2,\ldots)=(1,0,1,1,0,0,1,0,1,1\ldots).\]

\begin{theorem} \label{24n14}
Let $n$ be a positive integer with binary expansion $n=\sum_kb_k2^k$, and let $m=m(n)$ be the first index for which $b_k \neq s_k$ .
 If no such index exists, let $m=\infty$. Then
\begin{equation} \label{m5}
\nu_2(B^{\pm}(24n+14)=m+5.
\end{equation}
\end{theorem}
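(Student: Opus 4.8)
The plan is to connect $24n+14$ to the quantity $x_m = 24y_m + 14$ through Lemma~\ref{xm} and then read off the valuation directly from the binary digits of $s$.

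First I would record two structural facts about the construction of $y_m$ and $s$. Because $y_{m+1}$ equals either $y_m$ or $2^m + y_m$, the binary digits of $y_m$ in positions $0,\dots,m-1$ are frozen once produced; hence $y_m = \sum_{i=0}^{m-1} s_i 2^i$ is precisely the truncation of $s$ to its first $m$ bits, and $s_m$ is exactly the bit that $y_{m+1}$ adds in position $m$. Second, combining the dichotomy defining $y_{m+1}$ with the bound $\nu_2(B^{\pm}(x_m)) \ge m+5$ proved just above, I obtain the equivalence
\[ s_m = 0 \iff \nu_2\!\left(B^{\pm}(x_m)\right) \ge m+6, \qquad s_m = 1 \iff \nu_2\!\left(B^{\pm}(x_m)\right) = m+5. \]

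Now assume $m = m(n)$ is finite. Since $b_i = s_i$ for $i<m$, the residue of $n$ modulo $2^m$ is $y_m$, so I may write $n = y_m + 2^m t$ with $t = \sum_{k\ge 0} b_{m+k} 2^k$, whose parity is $t \equiv b_m \pmod 2$, and where $b_m \ne s_m$. Then $24n+14 = 24\cdot 2^m t + x_m$, and Lemma~\ref{xm} yields
\[ B^{\pm}(24n+14) \equiv B^{\pm}(x_m) + 2^{m+5} t \pmod{2^{m+6}}. \]
It therefore suffices to show the right-hand side has valuation exactly $m+5$, which follows from the equivalence above in two parity cases. If $s_m = 0$ then $b_m = 1$, so $t$ is odd while $\nu_2(B^{\pm}(x_m)) \ge m+6$; the term $2^{m+5}t$ then dominates and the sum has valuation $m+5$. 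If $s_m = 1$ then $b_m = 0$, so $t$ is even and $B^{\pm}(x_m) = 2^{m+5}u$ with $u$ odd; the sum is $2^{m+5}(u+t)$ with $u+t$ odd, again of valuation $m+5$. Since $m+5 < m+6$, the congruence forces $\nu_2(B^{\pm}(24n+14)) = m+5$, as claimed.

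Finally, the case $m = \infty$ occurs only when every digit of $n$ agrees with $s$; for a genuine positive integer this forces $s$ to be eventually zero and $n$ to equal the resulting integer, so $x_m$ stabilizes to $24n+14$ for all large $m$. The bound $\nu_2(B^{\pm}(x_m)) \ge m+5$ then gives $\nu_2(B^{\pm}(24n+14)) = \infty$, i.e. $B^{\pm}(24n+14) = 0$, matching $m+5 = \infty$. I expect the only genuinely delicate point to be the first paragraph's bookkeeping---verifying that $y_m$ is the truncation of $s$ and that the dichotomy defining $y_{m+1}$ is precisely the digit $s_m$---after which the valuation computation reduces to the short parity argument above.
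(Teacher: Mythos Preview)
Your proof is correct and follows essentially the same route as the paper: write $24n+14 = 24\cdot 2^m t + x_m$, apply Lemma~\ref{xm}, and split into the two parity cases governed by whether $s_m=0$ or $s_m=1$, with the $m=\infty$ case handled by the stabilization of $x_m$ and the bound $\nu_2(B^{\pm}(x_m))\ge m+5$. The only cosmetic difference is that you state the equivalence between $s_m$ and the valuation of $B^{\pm}(x_m)$ up front, whereas the paper derives it inside the case analysis.
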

\begin{proof}
Suppose first that $m=\infty$. Then the sequence $s$ has only finitely many non-zero entries. If $i$ is the largest index such that $s_i=1$, then $x_m=x_i$ for all $m\geq i$.
This implies that $B^{\pm}(x_i)$ is divisible by arbitrarily large powers of $2$. Hence $B^{\pm}(x_i)=0$, and (\ref{m5}) holds.

If $m<\infty$, then  $24n+14=24\cdot 2^m p+x_m$, where $p=b_m+2b_{m+1}+2^2b_{m+2}+\cdots$. Then Lemma \ref{xm} gives
\[B^{\pm}(24n+14)=B^{\pm}(24\cdot 2^m p +x_m)\equiv B^{\pm}(x_m)+2^{m+5}p \pmod{2^{m+6}}.\]
If $\nu_2(B^{\pm}(x_m))>m+5$, then $y_{m+1}=y_m$, and so $s_m=0$. Hence $b_m=1$ and $p$ is odd. On the other hand, if
$\nu_2(B^{\pm}(x_m))=m+5$, then $y_{m+1}=2^m+y_m$, so $s_m=1$ and $b_m=0$, implying that $p$ is even. In both cases the last congruence shows that
$\nu_2(B^{\pm}(24n+14))=m+5$.
\end{proof}

The last theorem shows that if the sequence $s$ contains infinitely many 1's, then $m$ is always finite, and hence the 2-adic valuation of $B^{\pm}(n)$ is finite for $n>2$.
On the other hand, if the sequence $s$ terminates and $s_i$ is its last non-zero entry, then $x_i$ is the only integer greater than 2 such that $B^{\pm}(x_i)=0$.
The question of whether or not the sequence $s$ terminates has not been settled, and so the existence of an exceptional integer $n>2$ such that $B^{\pm}(n)=0$ is still
an open question.

This question seems difficult to settle because there is no discernible pattern in the even-odd splitting of the subsequences of $24n+14$ that provide an increasing
2-adic valuation for $B^{\pm}(24n+14)$. However, some natural questions remain open for further exploration, especially with the
help of a computer algebra system, and their resolution could shed some light on the sequence $s$.

\begin{enumerate}
\item This paper derived results on the structure of the top left corner of $P^{3\cdot 2^m} = P^{2^{m+1}} P^{2^m}$ modulo a suitable power of 2. Can some results be derived about the top left corner of $P^{2^m}$? Ideally, enough information on the structure of the
individual powers of form $2^m$ could result in a better understanding of a generic power $n$, simply by considering the binary
expansion of $n$.
\item
It is easy to generalize the construction of Lemma \ref{Vj} and express $P^j(0,r)$ as a linear combination of $B^{\pm}(j+i)$, $0\leq i \leq r$,
for any $r$ (see Lemma 10.2 of \cite{ADM}), with coefficients defined by a recurrence relation. Is it possible to extract some more
information on the 2-adic valuation of the top row of $P^j$ from this recurrence?
\item
If $e_m=\nu_2\left(B^{\pm}(x_m)\right)$, what can be said about the odd part $B^{\pm}(x_m)/2^{e_m}$ of $B^{\pm}(x_m)$ modulo a
power of 2? It is not hard to see that this is strictly related to knowledge of the sequence $s$.
\end{enumerate}

\noindent
\textbf{Acknowledgements}
We wish to thank the referees for several useful suggestions, Victor Adamchick (who first introduced the first author
to this problem), and the
Louisiana Optical Network Institute (LONI) for partial support. Special thanks to Victor Moll and
Tewodros Amdeberhan for help and discussions spanning many years.

\end{document}